\newtheorem{example}{Example}
\title[Monochromatic Subgraphs in Randomly Colored Graphons]{Monochromatic Subgraphs in Randomly Colored Graphons}
\author[Bhattacharya]{Bhaswar B. Bhattacharya}
\address{Department of Statistics, University of Pennsylvania, Philadelphia, USA,
{\tt bhaswar@wharton.upenn.edu}}
\author[Mukherjee]{Sumit Mukherjee\textsuperscript{*}}\thanks{\textsuperscript{*}Research partially supported by NSF grant DMS-1712037}
\address{Department of Statistics, Columbia University, New York, USA, {\tt  sm3949@columbia.edu}}
\begin{document}

\begin{abstract} Let $T(H, G_n)$ be the number of monochromatic copies of a fixed connected graph $H$ in a uniformly random coloring of the vertices of the graph $G_n$. In this paper we give a complete characterization of the limiting distribution of $T(H, G_n)$, when $\{G_n\}_{n \geq 1}$ is a converging sequence of dense graphs. When the number of colors grows to infinity, depending on whether the expected value remains bounded, $T(H, G_n)$ either converges to a finite linear combination of independent Poisson variables or a normal distribution. On the other hand, when the number of colors is fixed, $T(H, G_n)$ converges to a (possibly infinite) linear combination of independent centered chi-squared random variables. This generalizes the classical birthday problem, which involves understanding the asymptotics of $T(K_s, K_n)$, the number of monochromatic $s$-cliques in a  complete graph $K_n$ ($s$-matching birthdays among a group of $n$ friends), to general monochromatic subgraphs in a network. 
\end{abstract}

\subjclass[2010]{05C15, 60C05,  60F05, 05D99}
\keywords{Birthday problem, Combinatorial probability, Graph limit theory, Limit theorems}

\maketitle

\section{Introduction}

Let $G_n$ be a simple labeled undirected graph with vertex set $V(G_n):=\{1,2,\cdots,|V(G_n)|\}$, edge set $E(G_n)$, and adjacency matrix $A(G_n)=\{a_{ij}(G_n),i,j\in  V(G_n)\}$. 
 In a {\it uniformly random $c_n$-coloring of $G_n$}, the vertices of $G_n$ are colored with $c_n$ colors as follows:  
\begin{equation}\P(v\in V(G_n) \text{ is colored with color } a\in \{1, 2, \ldots, c_n\})=\frac{1}{c_n},
\label{eq:uniform}
\end{equation}
independent from the other vertices. An edge $(a, b) \in E(G_n)$ is said to be {\it monochromatic} if $X_a=X_b$, where $X_v$ denotes the color of the vertex $v \in V(G_n)$ in a uniformly random $c_n$-coloring of $G_n$. Denote by  
\begin{align}\label{eq:me}
T(K_2, G_n)=\frac{1}{2} \sum_{1\le u \neq v\le |V(G_n)|} a_{uv}(G_n) \bm 1\{X_u=X_v\},
\end{align} 
the number of monochromatic edges in $G_n$. Note that $\P(T(K_{2}, G_n)>0)=1-\P(T(K_{2}, G_n)=0)=1-\chi_{G_n}(c_n)/c_n^{|V(G_n)|}$, where $\chi_{G_n}(c_n)$ counts the number of proper colorings of $G_n$ using $c_n$ colors. The function $\chi_{G_n}$ is known as the {\it chromatic polynomial} of $G_n$, and is a central object in graph theory \cite{chromaticbook,toft_book,toft_unsolved}. Moreover, the statistic \eqref{eq:me} shows up in various  applications, for example,  in the study of coincidences \cite{diaconismosteller} as a generalization of the birthday paradox \cite{barbourholstjanson,dasguptasurvey}, the Hamiltonian of the Ising/Potts models  \cite{bmpotts,BM}, and in non-parametric two-sample tests \cite{batu,fr}. This requires understanding the asymptotics of $T(K_2, G_n)$ for various graph sequences $G_n$. The limiting distribution of $T(K_{2}, G_n)$ has been recently characterized by Bhattacharya et al. \cite{BDM}, for any sequence of growing graphs $G_n$.

Given these results, it is natural to ask what happens for monochromatic triangles, and more general subgraphs. In this paper we consider the problem of determining the limiting distribution of the number of monochromatic copies of a general connected simple graph $H$, in a uniformly random $c_n$-coloring of a graph sequence $G_n$. Formally, this is defined as
$$T(H, G_n):=\frac{1}{|Aut(H)|}\sum_{\bm s \in V(G_n)_{|V(H)|}} \prod_{(a,b) \in E(H)}a_{s_a s_b}(G_n) \bm 1\{X_{=\bm s}\},$$
where:  
\begin{itemize}
\item[--] $ V(G_n)_{|V(H)|}$ is the set of all $|V(H)|$-tuples ${\bm s}=(s_1,\cdots, s_{|V(H)|})\in  V(G_n)^{|V(H)|}$ with distinct indices.\footnote{For a set $S$, the set $S^N$ denotes the $N$-fold cartesian product $S\times S \times \cdots \times S$.} Thus, the cardinality of $V(G_n)_{|V(H)|}$ is $\frac{|V(G_n)|!}{(|V(G_n)|-|V(H)|)!}$. 
\item[--] For any ${\bm s}=(s_1,\cdots, s_{|V(H)|}) \in  V(G_n)_{|V(H)|}$, 
\begin{align}\label{eq:sind}
\bm 1\{X_{=\bm s}\}:= \bm 1\{X_{s_1}=\cdots=X_{s_{|V(H)|}}\}.
\end{align}
\item[--] $Aut(H)$ is the {\it automorphism group} of $H$, that is, the number permutations $\sigma$ of the vertex set $V(H)$ such that $(x, y) \in E(H)$ if and only if $(\sigma(x), \sigma(y)) \in E(H)$. 
\end{itemize}


The class of possible limiting distributions of $T(H, G_n)$, for a general graph $H$, is extremely diverse \cite{BDM}, and, there appears to be no natural universality of the limiting distribution of $T(H, G_n)$, for a general graph sequence $\{G_n\}_{n \geq 1}$. In this paper, using results from the emerging theory of graph limits,  we provide a complete characterization of the limiting distribution of $T(H, G_n)$, for any simple connected graph $H$, whenever $\{G_n\}_{n\geq 1}$ is a convergent sequence of dense graphs \cite{lovasz_book}. Depending on the behavior of  $\E T(H, G_n)$ there are 3  different regimes:
\begin{itemize}
\item[(1)] $\E(T(H, G_n)) =O(1)$: In this case, $T(H, G_n)$ converges to a finite linear combination of independent Poisson random variables (Theorem \ref{thm:poisson}). 
\item[(2)] $\E(T(H, G_n)) \rightarrow \infty$, such that $c_n \rightarrow \infty$: Here, $T(H, G_n)$ is asymptotically Gaussian, after appropriate standardization (Theorem \ref{thm:clt}).
\item[(3)]$\E(T(H, G_n)) \rightarrow \infty$, such that $c_n =c$ is fixed: In this case, $T(H, G_n)$, after standardization, is asymptotically a (possibly infinite) linear combination of independent centered chi-squared random variables (Theorem \ref{thm:cfixed}).
\end{itemize}

We begin with a short background on graph limit theory. The results are formally stated in Section \ref{sec:results}.

\subsection{Graph Limit Theory}
\label{sec:graphlimit}

The theory of graph limits was developed by Lov\' asz and coauthors \cite{graph_limits_I,graph_limits_II,lovasz_book}, and has received phenomenal attention over the last few years. It builds a bridge between combinatorics and analysis, and has found applications in  several disciplines including statistical physics, probability, and statistics \cite{bmpotts,chatterjee_pd,CV}. 
For a detailed exposition of the theory of graph limits refer to Lov\' asz \cite{lovasz_book}. Here we mention the basic definitions about the convergence of graph sequences. If $F$ and $G$ are two graphs, then define the homomorphism density of $F$ into $G$ by
$$t(F,G) :=\frac{|\hom(F,G)|}{|V (G)|^{|V (F)|},}$$
where  $|\hom(F,G)|$ denotes the number of homomorphisms of $F$ into $G$. In fact, $t(F, G)$ is the proportion of maps $\phi: V (F) \rightarrow V (G)$ which define a graph homomorphism. Denote by $\hom_{\mathrm{inj}}(F, G)$ the number of injective maps from $F$ into $G$  which are homomorphisms, and $$t_{\mathrm{inj}}(F,G) :=\frac{|\hom_{\mathrm{inj}}(F,G)|}{|V(G)|(|V(G)|-1)\cdots (|V(G)|-|V(F)|+1)},$$
which is the proportion of injective maps which are homomorphisms. Moreover, denote by $t_{\mathrm{ind}}(F, G)$ the {\it induced homomorphism density}, that is, the proportion of injective maps $\phi: |V(F)| \rightarrow |V(G)|$, which satisfy $(\phi(x), \phi(y)) \in E(G)$ if and only if $(x, y) \in E(F)$:  
\begin{align}\label{eq:hmind}
t_{\mathrm{ind}}(F, G)=\frac{ \sum_{\bm s\in (V(G))_{|V(F)|}} \prod_{(a,b) \in E(F)}a_{s_a s_b}(G) \prod_{(a,b) \notin E(F)}(1-a_{s_a s_b}(G))}{|V(G)|(|V(G)|-1)\cdots (|V(G)|-|V(F)|+1)},
\end{align}
where $A(G)=(a_{ij}(G))_{i, j \in [|V(G)|]}$ is the adjacency matrix of $G$. 


To define the continuous analogue of graphs, consider $\sW$ to be the space of all measurable functions from $[0, 1]^2$ into $[0, 1]$ that satisfy $W(x, y) = W(y,x)$, for all $x, y$. For a simple graph $F$ with $V (F)= \{1, 2, \ldots, |V(F)|\}$, let
$$t(F,W) =\int_{[0,1]^{|V(F)|}}\prod_{(i,j)\in E(F)}W(x_i,x_j) \mathrm dx_1\mathrm dx_2\cdots \mathrm dx_{|V(F)|}.$$
\begin{defn}\cite{graph_limits_I,graph_limits_II,lovasz_book}\label{defn:graph_limit} A sequence of graphs $\{G_n\}_{n\geq 1}$ is said to {\it converge to $W$} if for every finite simple graph $F$, 
\begin{equation}
\lim_{n\rightarrow \infty}t(F, G_n) = t(F, W).
\label{eq:graph_limit}
\end{equation}
\end{defn}

If $G_n$ converges to $W$, according to definition above, then the injective homomorphsim densities converge: $t_{\mathrm{inj}}(F, G_n) \rightarrow t(F, W)$, for every simple graph $F$. Moreover, the induced homomorphism densities also converge, that is, $t_{\mathrm{ind}}(F, G_n) \rightarrow t_{\mathrm{ind}}(F, W)$, for every simple graph $F$, where 
\begin{align}\label{eq:indW}
t_{\mathrm{ind}}(F, W)=\int_{{[0, 1]}^{|V(F)|}} \prod_{(a,b) \in E(F)} W(x_a, x_b) \prod_{(a,b) \notin E(F)}(1-W(x_a, x_b)) \mathrm dx_1\mathrm dx_2 \cdots \mathrm dx_{|V(F)|}.
\end{align} 
The limit objects, that is, the elements of $\sW$, are called {\it graph limits} or {\it graphons}. A finite simple graph $G=(V(G), E(G))$ can also be represented as a graphon in a natural way: Define $f^G(x, y) =\boldsymbol 1\{(\ceil{|V(G)|x}, \ceil{|V(G)|y})\in E(G)\}$, that is, partition $[0, 1]^2$ into $|V(G)|^2$ squares of side length $1/|V(G)|$, and let $f^G(x, y)=1$ in the $(i, j)$-th square if $(i, j)\in E(G)$, and 0 otherwise. Observe that $t(F, f^{G}) = t(F,G)$ for every simple graph $H$ and therefore the constant sequence $G$ converges to the graph limit $f^G$. Define an equivalence relation on the space of graphons by $W_1\sim W_2$ iff $t(F,W_1)=t(F,W_2)$ for all simple graphs $F$. It turns out that the quotient space under this equivalence relation, equipped with the notion of convergence in terms of subgraph densities outlined above is a compact metric space using the {\it cut distance} (refer to \cite[Chapter 8]{lovasz_book}).

\subsection{Results}
\label{sec:results}

Throughout the paper, we will assume that $H$ is a finite, simple, and connected graph, and $G_n$ is a sequence of dense graphs converging to the graphon $W$ such that $t(H, W) >0$.  Depending on the limiting behavior of  $\E T(H, G_n)$ there are 3 different regimes.

\subsubsection{Linear Combination of Poissons}
\label{sec:thmlpoisson}

For a finite simple unlabeled graph $F$, let $N(F, G_n)$ be the number of copies of $F$ in $G_n$. Note that 
\begin{align}\label{eq:poissonexp}
N(H, G_n)=\frac{\hom_{\mathrm{inj}}(H, G_n)}{|Aut(H)|}\quad \text{and} \quad \E(T(H, G_n)) =\frac{N(H, G_n)}{c_n^{|V(H)|-1}}.
\end{align} 
We begin with the regime where the mean $\E(T(H, G_n))=O(1)$. In this case, the limit is a linear combination of independent Poisson variables, where the weights are determined by the limiting homomorphism densities of certain super-graphs of $H$. This is formalized in the following theorem:

\begin{thm}\label{thm:poisson} Let $G_n$ be a sequence of graphs converging to the graphon $W$, such that $t(H,W)>0$. Suppose $c_n\rightarrow\infty$, such that  $\E T(H, G_n) \rightarrow \lambda$. Then 
\begin{align}\label{eq:poissonlinear}
T(H, G_n) \dto \sum_{F \supseteq H: |V(F)|=|V(H)|} N(H, F) X_F,
\end{align}  
 where $X_F\sim \dPois\left(\lambda\cdot \frac{|Aut(H)|}{|Aut(F)|}\cdot \frac{t_{\mathrm{ind}}(F, W)}{t(H, W)}\right)$ and the collection $\{X_F: F \supseteq H \text{ and } |V(F)|=|V(H)|\}$ is independent.\footnote{For two graphs $F$ and $H$, $F \supseteq H$ means $F$ is a super-graph of $H$.} 
\end{thm}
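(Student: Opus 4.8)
The plan is to reduce the statement to a multivariate Poisson limit theorem for a finite family of ``cluster counts,'' one for each supergraph $F \supseteq H$ on $|V(H)|$ vertices, and then recover $T(H, G_n)$ through a linear (hence continuous) map. Write $v := |V(H)|$, and for each such $F$ let $Z_{F, n}$ denote the number of unordered vertex sets $S \subseteq V(G_n)$ with $|S| = v$ that are \emph{monochromatic} (all vertices of $S$ receive a common color) and whose induced subgraph $G_n[S]$ is isomorphic to $F$. The first step is the purely deterministic identity
$$T(H, G_n) = \sum_{F \supseteq H:\, |V(F)| = v} N(H, F)\, Z_{F, n},$$
which holds because every monochromatic copy of $H$ is supported on a monochromatic set $S$ of size $v$, the number of copies of $H$ supported on $S$ equals $N(H, G_n[S])$, and grouping the sets $S$ by the isomorphism type $F$ of $G_n[S]$ produces the display. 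Since there are only finitely many such $F$, it suffices, by the continuous mapping theorem applied to the linear map $(z_F)_F \mapsto \sum_F N(H,F) z_F$, to prove the joint convergence $(Z_{F,n})_F \dto (X_F)_F$ with the $X_F$ independent and $X_F \sim \dPois(\mu_F)$, where $\mu_F := \lambda \cdot \frac{|Aut(H)|}{|Aut(F)|} \cdot \frac{t_{\mathrm{ind}}(F,W)}{t(H,W)}$.

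For the joint Poisson limit I would use the method of factorial moments: it is enough to show that for every choice of nonnegative integers $(k_F)_F$,
$$\E \prod_F (Z_{F,n})_{(k_F)} \longrightarrow \prod_F \mu_F^{k_F}, \qquad (m)_{(k)} := m(m-1)\cdots(m-k+1).$$
Expanding the left side counts ordered families consisting of $k_F$ distinct induced-$F$ sets for each $F$, all monochromatic, and I would split this sum according to whether the chosen sets are pairwise vertex-disjoint. On the disjoint part the monochromatic events are independent across sets, each of probability $c_n^{-(v-1)}$, so the contribution factorizes; combined with the mean computation $\E Z_{F,n} \to \mu_F$ this yields $\prod_F \mu_F^{k_F}$. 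The mean itself follows from $\E Z_{F,n} = t_{\mathrm{ind}}(F, G_n)\,(|V(G_n)|)_{(v)}/(|Aut(F)|\, c_n^{v-1})$ together with $t_{\mathrm{ind}}(F, G_n) \to t_{\mathrm{ind}}(F,W)$ and the identity $\E T(H, G_n) = N(H,G_n)/c_n^{v-1}$, which (using $t_{\mathrm{inj}}(H, G_n) \to t(H,W)$ and $\E T(H,G_n) \to \lambda$) forces $(|V(G_n)|)_{(v)}/c_n^{v-1} \to \lambda |Aut(H)|/t(H,W)$.

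The crux, and the step I expect to be the main obstacle, is showing that the overlapping terms are asymptotically negligible; this is where the hypothesis $t(H,W) > 0$ enters decisively, since it pins down the growth rate $c_n \asymp |V(G_n)|^{v/(v-1)}$. If two chosen sets share $j \geq 1$ vertices, then, being internally monochromatic and overlapping, their union is forced to be a single color: the number of free vertices drops from $2v$ to $2v - j$ while the monochromatic probability changes from $c_n^{-2(v-1)}$ to $c_n^{-(2v - j - 1)}$, so a direct count shows such a configuration is smaller than the disjoint one by a factor of order $|V(G_n)|^{-j} c_n^{\,j-1} \asymp |V(G_n)|^{(j - v)/(v-1)}$, whose exponent is strictly negative for every $1 \le j \le v-1$. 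The bookkeeping must be carried out for arbitrary families, not merely pairs, of partially overlapping sets: one tracks the total number of free vertices against the total number of independent color constraints, and the point is that any overlap strictly reduces the surviving power of $|V(G_n)|$, so the power gained from fewer color constraints never compensates for the power lost from fewer free vertices. Once the factorial-moment limit is established, a standard multivariate Poisson convergence criterion yields the joint convergence of $(Z_{F,n})_F$ to independent Poissons, and the deterministic decomposition together with the continuous mapping theorem completes the proof.
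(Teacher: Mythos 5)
Your proposal is correct, and the combinatorial heart of it is the same as the paper's, but the argument is organized along a genuinely different route. The paper first introduces a surrogate statistic $J(H,G_n)$ in which each monochromatic indicator $\bm 1\{X_{=\bm s}\}$ is replaced by an independent Bernoulli with the same marginal, proves that all ordinary moments of $T(H,G_n)$ and $J(H,G_n)$ agree asymptotically (Lemma \ref{lm:momentcompare}), derives the Poisson limit for $J(H,G_n)$ essentially for free because it is a sum of independent Bernoullis split by the isomorphism type of $G_n[\bm s]$ (Lemma \ref{lm:Wpoisson}), and finally invokes the Stieltjes moment condition to upgrade moment convergence of $T(H,G_n)$ to convergence in distribution. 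You instead apply the decomposition $T(H,G_n)=\sum_F N(H,F)Z_{F,n}$ directly to $T$ rather than to a surrogate, and prove joint convergence of the cluster counts $(Z_{F,n})_F$ to independent Poissons by the method of factorial moments, finishing with the continuous mapping theorem. The key estimate is identical in both treatments: if a family of $k$ monochromatic $|V(H)|$-sets covers $p$ vertices in $q$ color-connected components, its contribution is of order $|V(G_n)|^{p}c_n^{-(p-q)}\asymp |V(G_n)|^{(q|V(H)|-p)/(|V(H)|-1)}$, and $p>q|V(H)|$ holds precisely when some overlap occurs, so overlapping configurations vanish --- this is exactly the $|V(F)|-\nu(F)$ versus $b|V(H)|-b$ comparison in Lemma \ref{lm:momentcompare}. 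What your route buys is the elimination of the surrogate and of the explicit moment-determinacy step for the limit $\sum_F N(H,F)X_F$ (that issue is absorbed into the standard factorial-moment criterion for multivariate Poisson convergence); what the paper's route buys is that the ``independent approximation'' heuristic is made explicit and the Poisson limit itself becomes a one-line consequence of independence. Two small points to keep in mind when writing this up: the normalization $c_n^{|V(H)|-1}\asymp |V(G_n)|^{|V(H)|}$ that drives your exponent count requires $\lambda>0$ (for $\lambda=0$ one only gets an inequality, which still suffices since the limit is degenerate), and in the disjoint part of the factorial moment you should note that restricting to pairwise disjoint families changes the count of ordered families only by $O(|V(G_n)|^{k|V(H)|-1})$, which is negligible at the scale $c_n^{-k(|V(H)|-1)}$.
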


The proof is based on a moment comparison technique, where the moments of $T(H, G_n)$ are compared with the moments of the corresponding random variable obtained by replacing every subset of $|V(H)|$ vertices with independent Bernoulli variables (refer to Section \ref{sec:lpoisson} for details).

\begin{remark} A useful special case of the above theorem, which generalizes the well-known birthday problem, is when $H=K_s$ is the $s$-clique (monochromatic cliques correspond to $s$-matching birthdays in a friendship network $G_n$). The asymptotics of multiple birthday matches have found many applications, for example, in the study of coincidences \cite[Problem 3]{diaconismosteller}, hash-function attacks in cryptology \cite{ns}, and the discrete logarithm problem \cite{BBBDL,ghdl}. Refer to Example \ref{ex:birthday} for more on the birthday paradox.  
\end{remark}

Arratia et al. \cite{arratia} used Stein's method based on dependency graphs to prove Poisson approximation theorems for $T(K_s, K_n)$, that is, the number of monochromatic $s$-cliques in a uniform coloring of a complete graph $K_n$ (see also Chatterjee et al. \cite{CDM}). Poisson limit theorems for $T(H, G_n)$ for general $H$ and arbitrary coloring distribution are given in Cerquetti and Fortini \cite{birthdayexchangeability}. They assumed that the distribution of colors
was exchangeable and proved that $T(H, G_n)$ converges in distribution to a mixture of Poisson. However, these results only give conditions under which the limit of $T(H, G_n)$ is a Poisson, and require several assumptions on the number of certain subgraphs in $G_n$ and the coloring distribution. On the other hand, Theorem \ref{thm:poisson} goes beyond the Poisson regime, and characterizes the limiting distribution of $T(H, G_n)$ for all dense graphs, under the uniform coloring distribution.

\subsubsection{Asymptotic Normality for Growing Colors}
\label{sec:thmclt}

Theorem \ref{thm:poisson} asserts that if $\E T(H, G_n) \rightarrow \lambda$, then the number of monochromatic copies of $H$ converges to a linear combination of Poissons. Recall that a Poisson random variable with mean growing to infinity converges to a standard normal distribution after centering by the mean and scaling by the standard deviation. Therefore, it is natural to wonder whether the same is true for $T(H, G_n)$, whenever $\E T(H, G_n) \rightarrow \infty$. To this end, define 
\begin{align}\label{eq:Z}
Z(H, G_n)=\frac{T(H, G_n)-\E T(H, G_n)}{\sqrt{\Var(T(H, G_n))}}.
\end{align} 
The theorem shows that $Z(H, G_n)$ has a universal CLT whenever  $\E T(H, G_n) \rightarrow \infty$ and $c_n \rightarrow \infty$. To this end, we define the {\it Wasserstein distance} between two probability measures $\mu$ and $\nu$ on $\R$ is, $$Wass(\mu, \nu) := \sup\left\{\int f \mathrm d \nu-\int f \mathrm d \mu: f: \R \rightarrow \R \text{ is 1-Lipschitz}\right\},$$ that is, supremum over all $f$ such that $|f(x)-f(y)| \leq |x-y|$. Moreover, for two nonnegative sequences $\{a_n\}_{n \geq 1}$  and $\{b_n\}_{n \geq 1}$,  $a_n \lesssim b_n$  means $a_n \leq C b_n$, for some constant $C >0$ and all $n$ large enough.

\begin{thm}\label{thm:clt} Let $G_n$ be a sequence of graphs converging to the graphon $W$, such that $t(H,W)>0$. If $c_n\rightarrow\infty$, then 
\begin{align}\label{eq:cltW}
Wass\left(Z(H, G_n), N(0,1)\right)\lesssim \left(\frac{c_n^{|V(H)|-1}}{|V(G_n)|^{|V(H)|}}\right)^{\frac{1}{2}}+ \left(\frac{1}{c_n}\right)^{\frac{1}{2}}.
\end{align}
This implies, if $t(H,W)>0$, then $Z(H, G_n) \dto N(0, 1)$, whenever $\E T(H, G_n) \rightarrow \infty$ and $c_n \rightarrow \infty$. 
\end{thm}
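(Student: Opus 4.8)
The plan is to apply Stein's method via exchangeable pairs directly to the standardized statistic $W:=Z(H,G_n)$, rather than to an approximating quadratic form. I work with $T(H,G_n)$ itself because the relative sizes of $c_n$ and $|V(G_n)|$ are unconstrained beyond $\E T(H,G_n)\to\infty$: when $c_n$ is much smaller than $|V(G_n)|$ the variance is dominated by pairs of copies of $H$ sharing exactly two vertices (the fluctuations are essentially those of a weighted monochromatic-edge count, as in \cite{BDM}), whereas when $c_n$ is much larger than $|V(G_n)|$ the variance is dominated by the diagonal and $T(H,G_n)$ is Poisson-like. A single exchangeable-pair argument should treat both extremes, and the crossover between them, uniformly, which is what the two-term bound in the statement reflects.

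Write $N:=|V(G_n)|$, $k:=|V(H)|$, let $X=(X_1,\dots,X_N)$ be the i.i.d.\ uniform colors, and set $\sigma^2:=\Var(T(H,G_n))$. First I would construct the exchangeable pair $(X,X')$ by choosing a vertex $I$ uniformly at random and replacing $X_I$ by an independent uniform color; let $T'$ be the recolored statistic and $W':=(T'-\E T(H,G_n))/\sigma$. The next step is an exact approximate-linearity identity. Using that for a tuple $\bm s$ containing $I$ the agreement indicator factors as $\bm 1\{X_{=\bm s}\}=\bm 1\{\text{all of }\bm s\setminus\{I\}\text{ agree}\}\cdot\bm 1\{X_I=\text{common color}\}$, a direct computation should give
\[ \E[W-W'\mid X]=\lambda W-\frac{1}{\sigma N c_n|Aut(H)|}\left(\Sigma_C-\E\Sigma_C\right),\qquad \lambda=\frac{k}{N}\left(1-\frac{1}{c_n}\right), \]
where $\Sigma_C:=\sum_{\bm s}\prod_{(a,b)\in E(H)}a_{s_as_b}(G_n)\,C_{\bm s}$ and $C_{\bm s}$ is the indicator that exactly one vertex of $\bm s$ disagrees with the common color of the remaining $k-1$. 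The structural fact responsible for the first-order degeneracy of the statistic is that conditioning on a single color does not change the expectation of a monochromatic indicator; this is precisely what makes the only remainder in the linearity the \emph{centered lower-order} count $\Sigma_C-\E\Sigma_C$.

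With this identity I would invoke the standard exchangeable-pairs Wasserstein bound, which controls $Wass(W,N(0,1))$ by three contributions: (i) $\lambda^{-1}\E|R|$, where $R$ is the linearity remainder; (ii) $\lambda^{-1}\sqrt{\Var(\tfrac12\E[(W-W')^2\mid X])}$, the fluctuation of the conditional variance around its target $\lambda$; and (iii) $\lambda^{-1}\E|W-W'|^3$, the cubed jump size. Term (i) reduces to bounding $\sqrt{\Var\Sigma_C}/(\sigma c_n)$, term (ii) to a fourth-moment estimate for $\E[(T-T')^2\mid X]$, and term (iii) to $\sigma^{-3}\E|T-T'|^3$. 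Each is evaluated by the same combinatorial device: expand the square or cube as a sum over pairs (respectively triples) of tuples and classify terms by the number $m$ of shared vertices, using that $\mathrm{Cov}(\bm 1\{X_{=\bm s}\},\bm 1\{X_{=\bm s'}\})=c_n^{-(2k-2)}(c_n^{m-1}-1)$ for tuples sharing $m\geq 1$ vertices. I expect terms (i) and (ii) to come out of order $(\E T(H,G_n))^{-1/2}$, while the skewness of the agreement indicators makes term (iii) of order $c_n^{-1/2}$, so that summing reproduces the bound of the theorem.

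The hard part will be the fourth-order estimates underlying terms (i) and (ii): the variance of the conditional variance and $\Var\Sigma_C$ must be pinned down \emph{uniformly} across the regimes $c_n\ll N$, $c_n\asymp N$, and $c_n\gg N$, because the dominant overlap pattern (pairwise-shared versus diagonal) changes across them. A crude local-dependence bound is too lossy, since the dependency neighborhood of a tuple has size of order $N^{k-1}$, which swamps the variance; the argument must instead exploit the exact cancellation in the covariance identity above — in particular its vanishing at $m=1$ — rather than mere counting. Showing that the resulting sums are governed by $\E T(H,G_n)$ and $c_n$ in the claimed way, for every fixed connected $H$ and every dense limit $W$ with $t(H,W)>0$, is where the bulk of the work lies. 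The passage from the Wasserstein bound to the asserted convergence is then immediate, as $\E T(H,G_n)\to\infty$ and $c_n\to\infty$ drive both error terms to zero.
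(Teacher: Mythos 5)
Your exchangeable pair and the approximate-linearity identity are correct, but the plan founders on term (iii), and the issue is structural rather than a matter of sharper bookkeeping. The paper's own proof uses the dependency-graph form of Stein's method (following \cite{chatterjee_dg}) rather than exchangeable pairs, and this choice is exactly what makes the cubic error term controllable. In your setup $\lambda \asymp |V(G_n)|^{-1}$, and recoloring a typical vertex $v$ changes $T(H,G_n)$ by roughly the number of monochromatic copies of $H$ through $v$ in $v$'s color, which is of order $|V(G_n)|^{|V(H)|-1}/c_n^{|V(H)|-1}$ when $c_n=o(|V(G_n)|)$. Since in that regime $\sigma\asymp |V(G_n)|^{|V(H)|-1}/c_n^{|V(H)|-3/2}$ (Lemma~\ref{lm:var}(c)), the typical jump is $|W-W'|\asymp c_n^{-1/2}$, hence $\lambda^{-1}\E|W-W'|^3\asymp |V(G_n)|\,c_n^{-3/2}$, not $c_n^{-1/2}$ as you predict. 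This diverges whenever $c_n=o(|V(G_n)|^{2/3})$: for instance $H=K_3$ and $c_n=\sqrt{|V(G_n)|}$ gives $\E T(H,G_n)\to\infty$ and a claimed rate $|V(G_n)|^{-1/4}$, while your term (iii) is $|V(G_n)|^{1/4}$. This is not an artifact of a loose estimate: the dominant contribution to $\E|T-T'|^3$ comes from triples of tuples through $v$ that pairwise share only $v$, so that $|\bm s_1\cup\bm s_2\cup\bm s_3|=3|V(H)|-2$; there are $\asymp|V(G_n)|^{3|V(H)|-3}$ of them, each contributing $\asymp c_n^{-(3|V(H)|-3)}$, and the absolute value in $\E|W-W'|^3$ forbids any sign cancellation among them.

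The paper's bound replaces this term by $R_2=\sigma^{-3}\sum_{\bm s_1}\E|Z_{\bm s_1}|\bigl(\sum_{\bm s_2\in N_{\bm s_1}}Z_{\bm s_2}\bigr)^2$, in which two of the three factors remain signed and centered; Lemma~\ref{lm:third}(a) then shows, by conditioning on a single shared vertex, that every triple with $|\bm s_1\cup\bm s_2\cup\bm s_3|=3|V(H)|-2$ contributes exactly zero, so the sum is governed by $|\bm s_1\cup\bm s_2\cup\bm s_3|\le 3|V(H)|-3$ and evaluates to $c_n^{-1/2}$. This is precisely the cancellation you correctly identify as essential for your terms (i) and (ii) --- the vanishing of conditional expectations at single-vertex overlaps --- but it is unavailable inside $\E|W-W'|^3$. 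As written the proposal therefore proves the theorem only in the regime $c_n\gtrsim |V(G_n)|^{2/3}$; to cover the full statement you would need either to switch to a local-dependence/dependency-graph normal approximation, or to invoke a non-standard exchangeable-pairs theorem that avoids the raw third absolute moment of the increment.
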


The proof of the above theorem is given in Section \ref{sec:normality}, and is based on a Stein's method based on dependency graphs.

\begin{remark}\label{rem:normal} For the case of monochromatic edges, \cite[Theorem 1.2]{BDM} showed that $Z(K_{2}, G_n) \dto N(0, 1)$, whenever $\E(T(K_2, G_n)) \rightarrow \infty$ such that $c_n \rightarrow \infty$, for {\it any} sequence of graphs $G_n$ with $|V(G_n)| \rightarrow \infty$. Error rates for the above CLT were obtained by Fang \cite{xiao}. The above theorem shows that this phenomenon extends to all simple connected graphs $H$, when $G_n$ is a converging sequence of dense graphs. Moreover, unlike in the case of edges, the density assumption $t(H, W)>0$ is, in general, necessary for $Z(H, G_n)$ to have a non-degenerate normal limit (see Example \ref{ex:normaldiamond}).  
\end{remark}

\subsubsection{Limiting Distribution for Fixed Number of Colors}
\label{sec:thmcfixed} 

In this section we derive the asymptotic distribution of the number of monochromatic subgraphs when  $\E T(H, G_n) \rightarrow \infty$  such that $c$ is fixed.
  
\begin{defn}({\it $2$-point homomorphism functions for graphons})\label{defn:tabxyHW}
Let $H$ be a labeled finite simple graph and $W$ is a graphon. Then, for $1 \leq u \ne v \leq |V(H)|$, the function $t_{u, v}(\cdot, \cdot, H, W): [0, 1]^2 \rightarrow [0, 1]$ is defined as:\footnote{For a graph $F=(V(F), E(F))$ and $S\subseteq V(F)$, the  {\it neighborhood} of $S$ in $F$ is $N_F(S)=\{v \in V(F): \exists ~u \in S \text{ such that } (u, v) \in E(F)\}$. Moreover, for $u, v \in V(F)$, $F\backslash \{u, v\}$ is the graph obtained by removing the vertices $u, v$ and all the edges incident on them.}  
\begin{align*}
& t_{u, v}(x, y, H, W) \\
&= W^+_{u,v}(x, y)\int_{[0,1]^{|V(H)|-2}} \prod_{r \in N_H(u)\backslash \{v\}}W(x, z_r) \prod_{s \in N_H(v) \backslash \{u\}} W(y, z_s) \prod_{(r, s) \in E(H\backslash \{u, v\}) } W(z_r, z_s)\prod_{r \notin \{u, v\}} \mathrm d z_r,
\end{align*}
with $W^+_{u,v}(x, y)=W(x, y)$ if $(u, v) \in E(H)$ and 1 otherwise. Note that $t_{v, u}(x, y, H, W)=t_{u, v}(y, x, H, W)$.

For example, when $H=K_{1, 2}$ is the $2$-star, with the central vertex labeled 1. Then the following hold:  
\begin{itemize} 
\item[--]$t_{1, 2}(x, y, K_{1, 2}, W) = t_{1, 3}(x, y, K_{1, 2}, W)= W(x, y) d_W(x)$, where $d_W(x)=\int_0^1 W(x, z)\mathrm d z$ is the degree function of the graphon $W$. 
\item[--] $t_{2, 3}(x, y, K_{1, 2}, W) = \int_{[0, 1]} W(x, z_1) W(y, z_1)  \mathrm dz_1$. 
\end{itemize}
Similarly, $t_{2, 1}(x, y, K_{1, 2}, W) =t_{3, 1}(x, y, K_{1, 2}, W)=W(x, y) d_W(y)$, and $$t_{3,2}(x, y, K_{1, 2}, W) = \int_{[0, 1]} W(x, z_1) W(y, z_1)  \mathrm dz_1.$$ More examples are computed in Section \ref{sec:examplecfixed}. 
\end{defn}

Using this definition we can now show that the limiting distribution of 
\begin{align}\label{eq:gamma}
\Gamma(H, G_n)=\frac{T(H,G_n)-\E T(H,G_n)}{|V(G_n)|^{|V(H)|-1}}, 
\end{align} 
is a linear combination of centered chi-squared random variables, whenever $\{G_n\}_{n \geq 1}$ converges and the number of colors is fixed. To this end, note that every bounded non-negative symmetric function $K:[0, 1]^2 \rightarrow \R$ defines an operator $T_K: L_2[0, 1]\rightarrow L_2(\R)$, by 
\begin{eqnarray*}
(T_Kf)(x)=\int_0^1K(x, y)f(y)\mathrm dy.
\end{eqnarray*}
$T_K$ is a Hilbert-Schmidt operator, which is compact and has a discrete spectrum, that is, a countable multi-set of non-zero real eigenvalues $\{\lambda_1(K), \lambda_2(K), \ldots, \}$, where every non-zero eigenvalue has finite multiplicity.

\begin{thm}\label{thm:cfixed} Let $G_n$ be a sequence of graphs converging  to the graphon $W$, such that $t(H, W)>0$. If $c_n=c$ is fixed, then 
\begin{align}\label{eq:gamma}
\Gamma(H, G_n)  \dto \frac{1}{c^{|V(H)|-1}} \cdot  \sum_{r=1}^\infty \lambda_r(H, W) \cdot \eta_r, 
\end{align} 
where 
\begin{itemize}
\item[--] $(\eta_1, \eta_2, \ldots)$ is a collection of independent $\chi^2_{(c-1)}-(c-1)$ random variables,\footnote{Recall that for a positive integer $a$, $\chi^2_{a}$ denotes the chi-squared distribution with $a$ degrees of freedom, that is, the sum of squares of $a$ independent standard normal ($N(0, 1)$) random variables.}

\item[--] $\{\lambda_1(H,W), \lambda_2(H,W), \ldots, \}$ is the multi-set of the non-zero eigenvalues of the bounded non-negative symmetric function
$W_H: [0, 1]^2 \rightarrow \R$ defined by 
\begin{align}\label{eq:WH}
W_H(x,y):=\frac{1}{2 |Aut(H)|}\sum_{1\leq u \ne v \leq |V(H)|} t_{u,v}(x,y,H,W).
\end{align}
(If the spectrum of $W_H$ is finite, then the sum in \eqref{eq:gamma} is interpreted by padding zeros to the spectrum.)
\end{itemize}
\end{thm}

Note that the function $W_H$ is symmetric because of the relation $t_{v, u}(x, y, H, W)=t_{u, v}(y, x, H, W)$ and is point-wise bounded by $W_H  \leq \frac{|V(H)|^2}{2 |Aut(H)|}$. The proof of the above theorem is given in Section \ref{sec:cfixed}. It has two main steps: 
\begin{itemize}
\item[--] The first step expands the random variable $\Gamma(H,G_n)$ as a polynomial in the i.i.d. {\it color vectors} $\{(\bm1\{X_v=a\})_{a\in [c]}: v\in V(G_n)\}$, and shows that only the quadratic term is dominant (refer to Lemmas \ref{lm:J2} and \ref{lm:delta2} for details).

\item[--] The second step shows that the limiting distribution of the quadratic term remains unchanged when the color vectors are replaced by a collection of i.i.d. Gaussian random vectors with the same mean and covariance structure (Lemma \ref{lm:invariance}). The result then follows by analyzing the asymptotics of the Gaussian counterpart. 
\end{itemize}

\subsection{Organization} The rest of the paper is organized as follows: The proof of Theorem \ref{thm:poisson} and its applications are given in Section \ref{sec:lpoisson}. The proof of Theorem \ref{thm:clt} is in Section \ref{sec:normality}. The proof of Theorem \ref{thm:cfixed} and related examples are discussed in Section \ref{sec:cfixed}.

\section{Linear Combination of Poissons: Proof of Theorem \ref{thm:poisson}}
\label{sec:lpoisson}

In this section we present the proof of Theorem \ref{thm:poisson} and discuss applications of this result in various examples. 

\subsection{Proof of Theorem \ref{thm:poisson}}

To analyze $T(H,G_n)$ we use the `independent approximation', where the indicators $\bm 1\{X_{=\bm s}\}$ are replaced by independent Bernoulli variables, for every subset of vertices in $G_n$ of size $|V(H)|$. To this end, define 
\begin{align}\label{eq:W}
J(H, G_n)=\frac{1}{|Aut(H)|}\sum_{\bm s \in V(G_n)_{|V(H)|}} \prod_{(a,b) \in E(H)}a_{s_a s_b}(G_n) J_{\bm s}, 
\end{align}
where  $\{J_{\bm s}, s_1 < s_2 < \cdots < s_{|V(H)|} \in V(G_n)\}$ is a collection of i.i.d. $\dBer(\frac{1}{c_n^{|V(H)|-1}})$ random variables, and if the coordinates of $\bm s$ are not in increasing order, define $J_{\bm s}=J_{\sigma(\bm s)}$, where $\sigma(\bm s)=(\sigma(s_1), \sigma(s_2), \cdots, \sigma(s_{|V(H)|}))$ such that $\sigma(s_1) <  \sigma(s_2) < \cdots < \sigma(s_{|V(H)|})$. This implies that  
\begin{align}\label{eq:Jcollection}
\left\{J_{\bm s}, \bm s \in {V(G_n) \choose {|V(H)|}}\right\}
\end{align} 
is a collection of i.i.d. $\dBer(\frac{1}{c_n^{|V(H)|-1}})$ random variables, where ${V(G_n) \choose {|V(H)|}}$ denotes the set of all $|V(H)|$-element subset of $V(G_n)$.

The following lemma shows that the moments of $T(H, G_n)$ and $J(H, G_n)$ are asymptotically close. Note that $A \lesssim_H B$, means $A\leq C(H) B$, where $C(H)$ is a constant that depends only on the graph $H$. Similarly,  $A \gtrsim_H B$, means $A \geq C(H) B$, where $C(H)$ is a constant that depends only on the graph $H$. 

\begin{lem}\label{lm:momentcompare} For any $r \geq 1$, 
\begin{align}\label{eq:TW}
\lim_{n \rightarrow \infty} \Big\{\E T(H, G_n)^r -\E J(H, G_n)^r\Big\}=0.
\end{align}
Moreover, there exists a constant $C=C(H, r)<\infty$ such that for all $n$ large, $\E T(H,G_n)^r\le C$ and  $\E J(H,G_n)^r\le C$. 
\end{lem}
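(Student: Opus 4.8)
The plan is to expand both $r$-th moments as sums over $r$-tuples of index-tuples and exploit the fact that $T(H,G_n)$ and $J(H,G_n)$ differ only in how the ``color-matching'' events are modeled. Writing $w(\bm s):=\frac{1}{|Aut(H)|}\prod_{(a,b)\in E(H)}a_{s_a s_b}(G_n)$, I would expand
$$\E T(H,G_n)^r=\sum_{\bm s^{(1)},\ldots,\bm s^{(r)}}\Big(\prod_{k=1}^r w(\bm s^{(k)})\Big)\,\E\Big[\prod_{k=1}^r\bm 1\{X_{=\bm s^{(k)}}\}\Big],$$
and likewise for $J(H,G_n)^r$ with the color-indicator expectation replaced by $\E[\prod_k J_{\bm s^{(k)}}]$. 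The decisive observation is that the combinatorial weight $\prod_k w(\bm s^{(k)})$ is \emph{identical} in both expansions, so the entire difference $\E T(H,G_n)^r-\E J(H,G_n)^r$ is controlled by the discrepancy between the two probability factors alone.

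Next I would organize the sum by its \emph{overlap pattern}: the partition of the $r|V(H)|$ slots recording which slots are sent to the same vertex of $G_n$ (with the constraint that the $|V(H)|$ slots within a single tuple are distinct). There are only $C(H,r)$ such patterns. For a pattern with $N$ distinct vertices, $b$ connected components in the graph joining two vertices whenever they co-occur in some common tuple, and $m$ distinct sorted tuples, the color factor equals $c_n^{-(N-b)}$ (each monochromatic component costs $|{\rm component}|-1$ free colors), while the Bernoulli factor equals $c_n^{-m(|V(H)|-1)}$ (independence across distinct sorted tuples, together with $J^2=J$). The sum of $\prod_k w(\bm s^{(k)})$ over all placements realizing a fixed pattern is trivially $\lesssim_H|V(G_n)|^N$.

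Then I would insert the scaling. Since $\E T(H,G_n)=N(H,G_n)/c_n^{|V(H)|-1}=O(1)$ and $t(H,W)>0$ forces $N(H,G_n)\gtrsim|V(G_n)|^{|V(H)|}$, one obtains $c_n\gtrsim|V(G_n)|^{|V(H)|/(|V(H)|-1)}$. Substituting, a pattern's contribution to $\E T(H,G_n)^r$ is $\lesssim|V(G_n)|^{(|V(H)|b-N)/(|V(H)|-1)}$, and to $\E J(H,G_n)^r$ is $\lesssim|V(G_n)|^{N-|V(H)|m}$. Because every component contains at least $|V(H)|$ vertices one has $N\geq|V(H)|b$, and because $m$ distinct tuples span at most $|V(H)|m$ vertices one has $N\leq|V(H)|m$; hence both exponents are $\leq0$, and summing over the finitely many patterns yields the uniform bounds $\E T(H,G_n)^r,\E J(H,G_n)^r\leq C(H,r)$.

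Finally, for the limit I would show that the non-matching patterns vanish. A pattern contributes to the difference only when $c_n^{-(N-b)}\neq c_n^{-m(|V(H)|-1)}$, and I would argue this forces \emph{both} exponents to be strictly negative. If the $T$-exponent vanishes then $N=|V(H)|b$, so each component has exactly $|V(H)|$ vertices and therefore consists of tuples with identical vertex sets; consequently $m=b$, the distinct tuples are pairwise disjoint, and both probability factors equal $c_n^{-b(|V(H)|-1)}$. Symmetrically, a vanishing $J$-exponent gives $N=|V(H)|m$, forcing the distinct tuples to be disjoint and again making the two factors coincide. Thus whenever the factors differ, both contributions are $\lesssim|V(G_n)|^{-\delta}$ for some $\delta>0$ depending on the pattern, so $|\E T(H,G_n)^r-\E J(H,G_n)^r|\to0$. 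I expect this last combinatorial step to be the main obstacle: tracking $N$, $b$, and $m$ precisely enough to prove that equality of the two probability factors is equivalent to the pattern being ``block-disjoint'' (components that are unions of coinciding tuples), which is exactly what makes the leading terms cancel.
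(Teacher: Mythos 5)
Your proposal is correct and follows essentially the same route as the paper: expand both $r$-th moments over tuples, note the weights agree, compare the two probability factors $c_n^{-(N-b)}$ versus $c_n^{-m(|V(H)|-1)}$ via the union/co-occurrence graph, and use the scaling $c_n^{|V(H)|-1}\gtrsim |V(G_n)|^{|V(H)|}$ (from $\E T(H,G_n)=O(1)$ and $t(H,W)>0$) to kill every configuration where the factors disagree. Your key combinatorial step (equality of the factors fails only when each exponent is strictly negative, because a vanishing exponent forces components to be single $|V(H)|$-vertex blocks, hence disjoint coinciding tuples) is exactly the paper's claim that $|V(F)|-\nu(F)<b|V(H)|-b$ implies $|V(F)|>|V(H)|\nu(F)$.
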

\begin{proof}
 Let $\cA$ be the collection of all ordered $r$-tuples $\bm s_1, \bm s_2, \ldots, \bm s_r$, where $\bm s_j \in V(G_n)_{|V(H)|}$ for $j \in [r]$, with $\bm s_1=(s_{11}, s_{12}, \ldots, s_{1|V(H)|})$, $\bm s_2=(s_{21}, s_{22}, \ldots, s_{2|V(H)|}), \ldots, \bm s_r=(s_{r1}, s_{r2}, \ldots, s_{r|V(H)|})$, such  that $\prod_{(a,b) \in E(H)}a_{s_{ja} s_{jb}}(G_n)=1$, for every $j \in [r]$. Then by the multinomial expansion, 
\begin{align}
|\E T(H, G_n)^r -\E J(H, G_n)^r| & \leq  \frac{1}{|Aut(H)|^r}\sum_{\cA}  \Bigg|\E \prod_{t=1}^r  \bm 1\{X_{=\bm s_t}\}  - \E \prod_{t=1}^r  J_{\bm s_t} \Bigg| \nonumber \\
& = \frac{1}{|Aut(H)|^r} \sum_{\cA} \left|\frac{1}{c_n^{|V(F)|-\nu(F)}}- \frac{1}{c_n^{b|V(H)|-b}} \right|, \nonumber
\end{align}
where $F=F(\bm s_1,\cdots,\bm s_r)$ is the graph on vertex set $V(F)=\bigcup_{t=1}^r \bm s_t$ and edge set $\bigcup_{t=1}^r \{(s_{ta}, s_{tb}): (a,b) \in E(H)\}$, $\nu(F)$ is the number of connected components of $F$, and $b$ is the number of distinct $|V(H)|$-element subsets in the collection $\{\bm s_1, \bm s_2, \ldots, \bm s_r\}$.\footnote{Here, an ordered $|V(H)|$-tuple $\bm z=(z_1, z_2, \ldots, z_{|V(H)|}) \in V(G_n)_{|V(H)|}$ is considered as a $|V(H)|$-element subset $\{z_1, z_2, \ldots, z_{|V(H)|}\}$ of $V(G_n)$. For example, in the collection of ordered triples $\{(1,2,3), (3,2,1), (4,3,6), (5, 4,1), (3,6, 4), (5, 6, 8)\}$, there 4 distinct 3-element subsets, that is, $b=4$. Hereafter, we will slightly abuse notation and use the tuple and set interpretations interchangeably, whenever it is clear from the context.} Note that if the graph $F$ is connected, $|V(F)| -1 \leq  b|V(H)|-b$, and therefore, in general $|V(F)| -\nu(F) \leq  b|V(H)|-b$.

We now claim that $|V(F)|-\nu(F)<b|V(H)|-b$ implies $|V(F)|>|V(H)|\nu(F)$. Indeed, first note that trivially $|V(F)| \geq |V(H)|\nu(F)$. If $|V(F)| = |V(H)|\nu(F)$, then every connected component of $F$ is isomorphic $H$, that is, $\nu(F)=b$ and $|V(F)|-\nu(F)=b |V(H)|-b$, verifying the claim. Thus, setting $\cN_{p,q,r}$ to be the set of all $r$ ordered tuples $\bm s_1,\cdots,\bm s_r$ in $\cA$ such that $|\bigcup_{t=1}^r \bm s_t|=p$ and $\nu(F)=q$, we have
\begin{align}
 |\E T(H, G_n)^r -\E J(H, G_n)^r| & \lesssim_H \sum_{(p,q):q|V(H)|< p\le r|V(H)|}\sum_{\cN_{p,q,r}} \frac{1}{c_n^{p-q}} \nonumber \\ 
& \lesssim_H \sum_{(p,q):q|V(H)|< p\le r|V(H)|}\frac{|V(G_n)|^p}{c_n^{p-q}} \tag*{(using $|\cN_{p,q,r}|=O(|V(G_n)|^{p})$)}\nonumber \\
& \lesssim_H \sum_{(p,q):q|V(H)|< p\le r|V(H)|}  \frac{|V(G_n)|^{p}}{|V(G_n)|^{\frac{|V(H)|}{|V(H)|-1}(p-q)}}    \tag*{(since $c_n^{|V(H)|-1}=\Theta(|V(G_n)|^{|V(H)|})$)}\nonumber \\
& \lesssim_H  \sum_{(p,q):q|V(H)|< p\le r|V(H)|}  \frac{1 }{|V(G_n)|^{\frac{1}{|V(H)|-1}(p-q|V(H)|)}}. \nonumber 
\end{align} 
Since $p>q|V(H)|$, each term in the above sum converges to $0$, and because the sum is over a finite index set free of $n$, \eqref{eq:TW} follows. 

Finally, from the above arguments it also follows that 
$$\E T(H, G_n)^r   \lesssim_H \sum_{(p,q):q|V(H)|\le p\le r|V(H)|}   \frac{1}{|V(G_n)|^{\frac{1}{|V(H)|-1}(p-|V(H)|q)}}=O(1),$$
since $p\geq |V(H)|q$, for all $(p,q)$ in the above sum.
\end{proof}

Next, we show that the limiting distribution of $J(H, G_n)$ is a linear combination of independent Poisson random variables. 

\begin{lem}\label{lm:Wpoisson} Let $J(H, G_n)$ be as defined in \eqref{eq:W}.  Then 
$$J(H, G_n) \rightarrow \sum_{F \supseteq H: |V(F)|=|V(H)|} N(H, F) X_F,$$ 
in distribution and in moments, where $X_F\sim \dPois\left(\lambda\cdot \frac{|Aut(H)|}{|Aut(F)|}\cdot \frac{t_{\mathrm{ind}}(F, W)}{t(H, W)}\right)$ and the collection $\{X_F: F \supseteq H \text{ and } |V(F)|=|V(H)|\}$ is independent. 
\end{lem}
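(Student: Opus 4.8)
The plan is to reorganize the double sum defining $J(H,G_n)$ according to the induced subgraph on the underlying vertex set, thereby reducing the problem to a superposition of independent binomial counts, each of which obeys the law of rare events.

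First I would group the tuples $\bm s \in V(G_n)_{|V(H)|}$ by their underlying set $S = \{s_1, \ldots, s_{|V(H)|}\}$. Since $J_{\bm s} = J_S$ depends only on $S$, and since for a fixed $S$ the quantity $\sum_{\bm s:\, \mathrm{set}(\bm s) = S} \prod_{(a,b)\in E(H)} a_{s_a s_b}(G_n)$ counts the bijective homomorphisms from $H$ onto the induced subgraph $G_n[S]$, which equals $|Aut(H)| \cdot N(H, G_n[S])$, one obtains
\[
J(H, G_n) = \sum_{S} N(H, G_n[S])\, J_S.
\]
Collecting the sets $S$ by the isomorphism type $F$ of $G_n[S]$ (necessarily $|V(F)| = |V(H)|$, with $N(H,F)\neq 0$ only when $F \supseteq H$) this becomes
\[
J(H, G_n) = \sum_{F \supseteq H:\, |V(F)| = |V(H)|} N(H, F)\, M_{F,n}, \qquad M_{F,n} := \sum_{S:\, G_n[S] \cong F} J_S.
\]
Because each $S$ has a unique induced isomorphism type, the index sets $\{S: G_n[S] \cong F\}$ are pairwise disjoint, so the family $\{M_{F,n}\}_F$ consists of independent random variables; moreover each $M_{F,n}$ is $\dBin\big(N_{\mathrm{ind}}(F, G_n),\, c_n^{-(|V(H)|-1)}\big)$, where $N_{\mathrm{ind}}(F, G_n)$ is the number of induced copies of $F$ in $G_n$.

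Next I would identify the limiting Poisson rate of each $M_{F,n}$. Writing $N_{\mathrm{ind}}(F, G_n) = t_{\mathrm{ind}}(F, G_n)\, |V(G_n)|_{|V(H)|} / |Aut(F)|$ and using $t_{\mathrm{ind}}(F, G_n) \to t_{\mathrm{ind}}(F, W)$, together with the normalization forced by the hypothesis $\E T(H, G_n) \to \lambda$, namely $|V(G_n)|^{|V(H)|}/c_n^{|V(H)|-1} \to \lambda |Aut(H)|/t(H,W)$ (from \eqref{eq:poissonexp} and $t_{\mathrm{inj}}(H, G_n) \to t(H,W)$), a direct computation gives
\[
\E M_{F,n} \to \lambda \cdot \frac{|Aut(H)|}{|Aut(F)|}\cdot \frac{t_{\mathrm{ind}}(F, W)}{t(H, W)} =: \lambda_F.
\]
Since $c_n \to \infty$ forces the success probability to zero while the number of trials $N_{\mathrm{ind}}(F,G_n) = \Theta(|V(G_n)|^{|V(H)|}) \to \infty$ and the mean stays bounded, the law of rare events yields $M_{F,n} \dto \dPois(\lambda_F)$. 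Combined with the independence of the $M_{F,n}$ and the finiteness of the index set $\{F \supseteq H: |V(F)| = |V(H)|\}$, this gives the claimed joint convergence to independent Poissons, hence the distributional convergence of the linear combination.

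Finally, for convergence in moments I would use that the descending factorial moments of a binomial, $\E (M_{F,n})_{(k)} = (N_{\mathrm{ind}}(F,G_n))_{(k)}\, c_n^{-k(|V(H)|-1)} \to \lambda_F^k$, converge to those of $\dPois(\lambda_F)$; independence across $F$ then makes every mixed moment of $(M_{F,n})_F$ converge to the corresponding product, and expanding the finite linear combination $\sum_F N(H,F) M_{F,n}$ converts this into convergence of all moments of $J(H, G_n)$. I expect the main obstacle to be the bookkeeping in the first paragraph: verifying the combinatorial identity $\sum_{\bm s:\, \mathrm{set}(\bm s)=S}\prod_{(a,b)\in E(H)} a_{s_a s_b}(G_n) = |Aut(H)|\, N(H, G_n[S])$ and cleanly translating the sum over sets into induced homomorphism densities, together with checking that the disjointness of the induced-type classes genuinely delivers the \emph{joint} (not merely marginal) Poisson limit.
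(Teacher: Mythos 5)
Your proposal is correct and follows essentially the same route as the paper: group the tuples by their underlying vertex set to get $\sum_S N(H,G_n[S])J_S$, split by the induced isomorphism type $F$ into independent binomial counts, compute the limiting mean via the normalization $|V(G_n)|^{|V(H)|}/c_n^{|V(H)|-1}\to \lambda|Aut(H)|/t(H,W)$ and $t_{\mathrm{ind}}(F,G_n)\to t_{\mathrm{ind}}(F,W)$, and invoke the Poisson law of rare events together with moment convergence for binomials. The combinatorial identity you flag as the main bookkeeping point is exactly the step the paper uses (implicitly) in passing to $\sum_{S}N(H,G_n[S])J_S$, and it holds as you state it.
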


\begin{proof} Let ${V(G_n) \choose {|V(H)|}}$ be the collection of $|V(H)|$-element subsets of $V(G_n)$. For $S \subseteq V(G_n)$ denote by $G_n[S]$ the subgraph of $G_n$ induced on the set $S$. Then recalling the definition of $J(H, G_n)$ from \eqref{eq:W} gives 
\begin{align}\label{eq:Wsum}
J(H, G_n)&=\frac{1}{|Aut(H)|}\sum_{\bm s \in V(G_n)_{|V(H)|}} \prod_{(a,b) \in E(H)}a_{s_a s_b}(G_n) J_{\bm s} \nonumber \\
& = \sum_{\bm s \in {V(G_n) \choose {|V(H)|}}} N(H, G_n[\bm s]) J_{\bm s} \nonumber \\
&= \sum_{F \supseteq H: |V(F)|=|V(H)|} N(H, F) \sum_{\bm s \in {V(G_n) \choose {|V(H)|}}}  \bm 1\{ G_n[\bm s]=F\} \cdot J_{\bm s}.
\end{align}

Now, note that, by \eqref{eq:Jcollection}, the collection $$\left\{\sum_{\bm s \in {V(G_n) \choose {|V(H)|}}} \bm 1\{G_n[\bm s]=F\} \cdot J_{\bm s} :  F \supseteq H \text{ and } |V(F)|=|V(H)| \right\}$$  is independent, since for any two distinct super-graphs $F_1, F_2 \supseteq H$, with $|V(F_1)|=|V(F_2)|=|V(H)|$, the sets 
$$\left\{\bm s \in {V(G_n) \choose {|V(H)|}}: \bm 1\{G_n[\bm s]=F_1\} \right\}, \quad \text{and} \quad \left\{\bm s \in {V(G_n) \choose {|V(H)|}}: \bm 1\{G_n[\bm s]=F_2\} \right\}$$
are disjoint. Moreover, for every fixed $F$, $$J_n(F):=\sum_{\bm s \in {V(G_n) \choose {|V(H)|}} } \bm 1\{ G_n[\bm s]=F \} J_{\bm s}$$ is a sum of independent $\dBer\left(\frac{1}{c_n^{|V(H)|-1}}\right)$ random variables. Therefore, to prove theorem it suffices to show that $J_n(F) \rightarrow X_F$ (with $X_F$ as defined in the statement of the theorem) in distribution and in moments, which follows if we can prove that 
\begin{align}\label{eq:ZFexpI} 
\E(J_n(F)) \rightarrow \lambda \cdot \frac{|Aut(H)|}{|Aut(F)|}\cdot \frac{t_{\mathrm{ind}}(F, W)}{t(H, W)}.
\end{align} 

To show \eqref{eq:ZFexpI}, first note that $$\frac{|V(G_n)|^{|V(H)|}}{c_n^{|V(H)|-1}} = \frac{N(H, G_n)}{c_n^{|V(H)|-1}}\cdot \frac{|V(G_n)|^{|V(H)|}}{N(H, G_n)} =(1+o(1))\lambda  \cdot \frac{|V(G_n)|^{|V(H)|}}{\hom_{\mathrm{inj}}(H, G_n)/|Aut(H)|} \rightarrow \lambda \frac{|Aut(H)|}{t(H, W)},$$ 
using $\frac{\hom_{\mathrm{inj}}(H, G_n)}{|V(G_n)|^{|V(H)|}} = (1+o(1)) t_{\mathrm{inj}}(H, G_n) \rightarrow  t(H, W)$ (by \cite[(5.21)]{lovasz_book}.

Then recalling \eqref{eq:poissonexp}, 
\begin{align}
& \E\sum_{\bm s \in {V(G_n) \choose {|V(H)|}}}  \bm 1\{G_n[\bm s]=F\} \cdot J_{\bm s} \nonumber\\
& =\frac{1}{c_n^{|V(H)|-1}}  \sum_{\bm s \in{V(G_n) \choose {|V(H)|}}} \prod_{(a,b) \in E(F)}a_{s_a s_b}(G_n) \prod_{(a,b) \notin E(F)}(1-a_{s_a s_b}(G_n)) \nonumber \\
& =\frac{|V(G_n)|^{|V(H)|}}{c_n^{|V(H)|-1}} \frac{1}{|V(G_n)|^{|V(H)|}} \sum_{\bm s \in{V(G_n) \choose {|V(F)|}}} \prod_{(a,b) \in E(F)}a_{s_a s_b}(G_n) \prod_{(a,b) \notin E(F)}(1-a_{s_a s_b}(G_n)) \nonumber \\
& = (1+o(1)) \lambda \frac{|Aut(H)|}{t(H, W)} \cdot \frac{1}{|V(G_n)|^{|V(H)|}} \sum_{\bm s \in{V(G_n) \choose {|V(F)|}}} \prod_{(a,b) \in E(F)}a_{s_a s_b}(G_n) \prod_{(a,b) \notin E(F)}(1-a_{s_a s_b}(G_n)) \nonumber \\ 
& = (1+o(1)) \lambda \frac{|Aut(H)|}{t(H, W)} \cdot \frac{t_{\mathrm{ind}}(F, G_n)}{|Aut(F)|} \tag*{(recall \eqref{eq:hmind})}\nonumber \\
\label{eq:ZFexpII}& \rightarrow  \lambda \frac{|Aut(H)|}{t(H, W)} \cdot \frac{t_{\mathrm{ind}}(F, W)}{|Aut(F)|}. 
\end{align}   
where the last step uses $t_{\mathrm{ind}}(F, G_n) \rightarrow t_{\mathrm{ind}}(F, W)$ (since $G_n$ converges to $W$). \end{proof}

The proof of Theorem \ref{thm:poisson} can be easily completed using the above two lemmas:  
Let $Y:=\sum_{F \supseteq H: |V(F)|=|V(H)|} N(H, F) X_F$. By Lemma \ref{lm:momentcompare} and Lemma \ref{lm:Wpoisson}, $T(H, G_n)$ converges in moments to $Y$. Now, it is easy to check that $Y$ satisfies the Stieltjes moment condition \cite{Stieltjes}, therefore, it is uniquely determined by its moments. This implies $T(H, G_n) \dto Y$ as well, and hence completes the proof of Theorem \ref{thm:poisson}.

\subsection{Examples} Theorem \ref{thm:poisson} can be easily extended to converging sequence of dense random graphs, when the limits in \eqref{eq:graph_limit} hold in probability,  by conditioning on the graph, under the assumption that the graph and its coloring are jointly independent. Here, we compute the limiting distribution \eqref{eq:poissonlinear} for the Erd\H os-R\'enyi random graph.

\begin{example}(Erd\H os-R\'enyi random graphs) Let $G_n \sim G(n, p)$ be the Erd\H os-R\'enyi random graph. In this case $G_n$ converges to the constant function $W^{(p)}=p$. This implies that $t(H, W^{(p)})=p^{|E(H)|}$ and $t_{\mathrm{ind}}(F, W^{(p)})=p^{|E(F)|}(1-p)^{{|V(H)| \choose 2}-|E(F)|}$. Therefore, by Theorem \ref{thm:poisson},  choosing $c_n$ such that $\E T(H, G_n) =(1+o(1)) \frac{{|V(G_n)| \choose |V(H)|} p^{|E(H)|}}{c_n^{|V(H)|-1}} \rightarrow \lambda$, gives 
$$T(H, G_n) \dto \sum_{F \supseteq H: |V(F)|=|V(H)|} N(H, F) X_F,$$ 
where $X_F\sim \dPois\left(\lambda\cdot \frac{|Aut(H)|}{|Aut(F)|}p^{|E(F)|-|E(H)|}(1-p)^{{|V(H)| \choose 2}-|E(F)|}\right)$ and the collection $\{X_F: F \supseteq H \text{ and } |V(F)|=|V(H)|\}$ is independent. 
\begin{itemize}
\item[--]When $G_n=K_n$ (that is, $p=1$), $X_{K_{|V(H)|}}\sim \dPois\left(\lambda\cdot \frac{|Aut(H)|}{|V(H)|!}\right)=\dPois\left(\frac{\lambda}{N(H, K_{|V(H)|})}\right)$ and $X_F=0$ otherwise. Therefore, $$T(H, K_n) \dto N(H, K_{|V(H)|})\cdot \dPois\left(\frac{\lambda}{N(H, K_{|V(H)|})}\right).$$

\item[--] If $H=K_s$ is the complete graph, then $\{F \supseteq H: |V(F)|=|V(H)|\}=\{H\}$ and, therefore, $T(H, G_n) \dto \dPois(\lambda)$. 
\end{itemize}
\end{example}

When $H=K_s$ is the $s$-clique, we have a birthday problem on a general friendship network $G_n$.

\begin{example}\label{ex:birthday} (Birthday Problem) In the well-known birthday problem, $G_n$ is a friendship-network graph where the vertices are colored uniformly with $c_n=365$ colors (corresponding to birthdays). In this case, two friends will have the same birthday whenever the corresponding edge in the graph $G_n$ is monochromatic.  Therefore, $\P(T(K_{s}, G_n)>0)$ is the probability that there is an $s$-{\it fold birthday match}, that is, there are $s$ friends with the same birthday. For this problem, Theorem \ref{thm:poisson} can be used to do an approximate sample size calculation. For example, using $T(K_s, G_n) \dto \dPois(\lambda)$, where $\frac{N(K_s, G_n)}{c_n^{s-1}} \rightarrow \lambda$, and $\frac{1}{|V(G_n)|^s}N(K_s, G_n)\rightarrow \frac{1}{s!}t(K_s, W)$, gives 
$$\P(T(K_{s}, G_n)>0) \approx 1- e^{-\frac{N(K_s, G_n)}{c_n^{s-1}}} = p, \text{ which implies } |V(G_n)| \approx \left(\frac{s!}{t(K_s, W)} c_n^{s-1} \log\left(\frac{1}{1-p}\right) \right)^\frac{1}{s},$$
which approximates the minimum number of people needed to ensure a  $s$-fold birthday match in the network $G_n$, with probability at least $p$. When the underlying graph $G_n=K_n$ is the complete graph $K_n$ on $n$ vertices, this reduces to the classical birthday problem. For example, when $G_n=K_n$, $p=\frac{1}{2}$ and $s=3$, using $c_n=365$, the RHS above evaluates approximately to  82.1, that is, in any group of 83 people, with probability at least 50\%, there are three friends all having the same birthday. 
\end{example}

The assumption $t(H, W)>0$ in Theorem \ref{thm:poisson} enforces that $c_n^{|V(H)|-1}=\Theta(|V(G_n)|^{|V(H)|})$, and, in this regime, the limiting distribution of $T(H, G_n)$ is a linear combination of Poissons \eqref{eq:poissonlinear}. However when $t(H, W)=0$, for the scaling $c_n^{|V(H)|-1}\sim N(H, G_n)$ we can get `non-linear' limiting distributions, as shown below.

\begin{example}\label{ex:productpoisson}(Product of Independent Poissons) Let $G_n=K_{1, n, n}$, the complete 3-partite graph, with partite sets $\{z\}, B, C$ such that $|B|=|C|=n$. Note that every triangle in $G_n$ passes through $z$, hence, $N(K_3, G_n)=n^2$. In this case, the limiting graphon is $W(x,y)=\bm 1\{(x-\frac{1}{2})(y-\frac{1}{2}) \leq 0\}$, for which $t(K_3,W)=0$. However, if we color $G_n$ randomly with $c_n=n$ colors, such that $N(K_3, G_n)/c_n^2=1$, then $T(K_3, G_n)$ has a non-degenerate limiting distribution:  For $a \in [c_n]$, let $L_n(a)$ and $R_n(a)$ be the number of vertices in sets $B$ and $C$ with color $a$, respectively. Clearly, $$L_n(a) \sim \dBin(n, 1/c_n) \dto X, \quad R_n(a) \sim \dBin(n, 1/c_n) \dto Y,$$ where $X$ and $Y$ are independent $\dPois(1)$ variables. Thus, given the color of the vertex $z$ is $a$, 
$T(K_3, G_n)= L_n(a)  R_n(a) \dto X  Y,$ and therefore, unconditionally $T(K_3,G_n)\dto XY$, the product of two independent $\dPois(1)$ random variables.
\end{example}

\section{Asymptotic Normality: Proof of Theorem \ref{thm:clt}}
\label{sec:normality}

In this section we prove the asymptotic normality of $Z(H, G_n)$,  whenever $t(H,W)>0$ and $\E T(H, G_n) \rightarrow \infty$ such that $c_n \rightarrow \infty$. We begin the following definition.

\begin{defn}\label{defn:H2}Given a graph $H$ with vertices labeled $\{1, 2, \ldots, |V(H)|\}$ such that $a, b \in V(H)$, define the $(a, b)$-{\it join} of $H$, denoted by $H^2_{(a, b)}$ as follows: Let $H'$ be an isomorphic copy of $H$ with vertices $\{1', 2', \ldots, |V(H)|'\}$, where the vertex $s$ maps to the vertex $s'$, for $s \in V(H)$. The graph $H^2_{(a, b)}$ is obtained by identifying the vertex $a$ and $b$ in $H$ with the vertex $a'$ and $b'$ in $H'$, that is, $V(H^2_{(a, b)})=V(H)\bigcup V(H')\backslash\{a', b'\}$ and $$E(H^2_{(a, b)})=E(H)\bigcup E(H'\backslash\{a', b'\})\bigcup \{(a, x'): x' \in N_{H'}(a')\} \bigcup \{(b, y'): y' \in N_{H'}(b')\}.$$  
Note that $H^2_{(a, b)}$ has $2 |V(H)|-2$ vertices and $2|E(H)|-1$ or $2|E(H)|$ edges, depending on whether the edge $(a,b)$ is present or absent in $E(H)$ respectively. 
\end{defn}

\begin{lem}\label{lm:count} Let $H$ be a graph with vertices labeled $\{1, 2, \ldots, |V(H)|\}$ such that $a,b\in V(H)$. Then $t(H^2_{(a, b)},W)>0$, whenever $t(H,W)>0$.
\end{lem}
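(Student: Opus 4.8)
The plan is to prove positivity by a \emph{pinning-and-factorization} argument: fix the colors (i.e. the graphon coordinates) at the two glued vertices $a$ and $b$ to values $x$ and $y$, integrate out everything else, and exploit the fact that $H^2_{(a,b)}$ is built from two copies of $H$ sharing only $\{a,b\}$. Concretely, let $g(x,y)$ denote the partial density obtained by setting $x_a=x$, $x_b=y$ and integrating out the remaining vertices of a single copy of $H$, keeping every edge of $H$ except the possible edge $(a,b)$:
$$g(x,y) := \int_{[0,1]^{|V(H)|-2}} \prod_{\substack{(i,j)\in E(H)\\ (i,j)\neq(a,b)}} W(x_i,x_j) \prod_{r\notin\{a,b\}} \mathrm{d}x_r,$$
evaluated at $x_a=x$ and $x_b=y$. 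This is exactly the integral appearing in Definition \ref{defn:tabxyHW}, so that $t_{a,b}(x,y,H,W)=W^+_{a,b}(x,y)\,g(x,y)$. Pinning $x_a,x_b$ in the defining integral of $t(H,W)$ then gives the first key identity
$$t(H,W) = \int_{[0,1]^2} W^+_{a,b}(x,y)\, g(x,y)\,\mathrm{d}x\,\mathrm{d}y,$$
where $W^+_{a,b}$ carries the single factor $W(x,y)$ when $(a,b)\in E(H)$ and equals $1$ otherwise.

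Next I would read off the analogous factorization for the join. By Definition \ref{defn:H2}, $H^2_{(a,b)}$ is precisely two copies of $H$ identified along the vertices $a$ and $b$, sharing (when $(a,b)\in E(H)$) the single edge $(a,b)$ and nothing else. Hence, pinning $x_a=x$, $x_b=y$ in $t(H^2_{(a,b)},W)$ decouples the two copies: the internal vertices of each copy integrate independently to the same factor $g(x,y)$, while the shared edge contributes the single factor $W^+_{a,b}(x,y)$. This yields the second key identity
$$t(H^2_{(a,b)},W) = \int_{[0,1]^2} W^+_{a,b}(x,y)\, g(x,y)^2\,\mathrm{d}x\,\mathrm{d}y.$$

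The positivity conclusion is then a short measure-theoretic comparison of the two displays. Since $W^+_{a,b}\ge 0$ and $g\ge 0$ pointwise, the integrand $W^+_{a,b}\,g$ is non-negative with $\int W^+_{a,b}\,g = t(H,W) > 0$, so the set $S := \{(x,y): W^+_{a,b}(x,y)\,g(x,y) > 0\}$ has positive Lebesgue measure. On $S$ both factors $W^+_{a,b}(x,y)$ and $g(x,y)$ are strictly positive, whence $W^+_{a,b}(x,y)\,g(x,y)^2 > 0$ throughout $S$, and therefore
$$t(H^2_{(a,b)},W) \ge \int_{S} W^+_{a,b}(x,y)\, g(x,y)^2\,\mathrm{d}x\,\mathrm{d}y > 0.$$

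The two factorization formulas are essentially bookkeeping, but the step that needs genuine care is the decoupling identity for $t(H^2_{(a,b)},W)$: one must check against Definition \ref{defn:H2} that gluing two copies of $H$ along $\{a,b\}$ reproduces exactly the stated edge set, and in particular that the edge $(a,b)$ (when present in $E(H)$) is counted \emph{once}, not twice, so that the shared factor is $W^+_{a,b}$ rather than $(W^+_{a,b})^2$. Once this is settled, the comparison $\int W^+_{a,b}\,g > 0 \Rightarrow \int W^+_{a,b}\,g^2 > 0$ is immediate from non-negativity of the integrands, completing the proof.
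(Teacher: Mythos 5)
Your proof is correct and follows essentially the same route as the paper: both arguments pin the coordinates at $a,b$ and use the factorizations $t(H,W)=\int W^+_{a,b}\,g$ and $t(H^2_{(a,b)},W)=\int W^+_{a,b}\,g^2$ (in the paper's notation $t_{a,b}(\cdot,\cdot,H,W)=W^+_{a,b}\,g$), including the observation that the shared edge $(a,b)$ is counted once. The only difference is the final step: the paper applies Cauchy--Schwarz, which additionally yields the quantitative bound $t(H^2_{(a,b)},W)\ge t(H,W)^2$, whereas you conclude qualitatively from the positive-measure set where $W^+_{a,b}\,g>0$; both are valid.
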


\begin{proof} Recalling Definition \ref{defn:tabxyHW}, gives  
\begin{align*}
t(H,W)=&\int_{[0,1]^2}  t_{a, b}(x_a, x_b, H, W)\mathrm dx_a\mathrm dx_b \\
\leq &  \left(\int_{[0,1]^2}  t_{a, b}^2(x_a, x_b, H, W) \mathrm dx_a\mathrm dx_b \right)^{\frac{1}{2}} =\left(t(H^2_{(a, b)},W)\right)^{\frac{1}{2}} \tag*{(by Cauchy-Schwarz)}\\
\end{align*}
This implies $t(H^2_{(a, b)},W) \geq t(H,W)^2 >0$.
\end{proof}

\begin{defn}\label{defn:Zs} For $\bm s \in V(G_n)_{|V(H)|}$, define 
\begin{align}\label{eq:Zs}
Z_{\bm s}:=\bm 1\{X_{=\bm s}\}-\E \bm 1\{X_{=\bm s}\}=\bm 1\{X_{=\bm s}\}-\frac{1}{c_n^{|V(H)|-1}},
\end{align}
where $\bm 1\{X_{=\bm s}\}$ is as defined in \eqref{eq:sind}. Then 
\begin{align}\label{eq:MGn}
T(H, G_n)-\E T(H, G_n)&=\sum_{{\bm s}\in V(G_n)_{|V(H)|}} M_{G_n}(\bm s, H)\left(\bm 1\{X_{=\bm s}\}-\frac{1}{c_n^{|V(H)|-1}}\right) \nonumber \\
&=\sum_{{\bm s}\in V(G_n)_{|V(H)|}} M_{G_n}(\bm s, H) Z_{\bm s}, 
\end{align}
where $ M_{G_n}(\bm s, H)=\frac{1}{|Aut(H)|} \prod_{(a,b) \in E(H)}a_{s_a s_b}(G_n)$. 
\end{defn}

The following lemma calculates the covariance of $Z_{\bm s}$ and $Z_{\bm t}$ and obtains a lower bound on the variance of $T(H, G_n)$. 

\begin{lem}\label{lm:var} The following hold: 
\begin{enumerate}
\item[(a)]
For $\bm s, \bm t \in V(G_n)_{|V(H)|}$, 
\begin{align*}
\E \bm 1\{X_{=\bm s}\}\bm 1\{X_{=\bm t}\}=\left\{
\begin{array}{ccc}
\frac{1}{c_n^{|{\bm s}\cup {\bm t}|-2}}  &  \text{ if } &  \left|{\bm s}\cup {\bm t}\right|=2|V(H)|  \\
\frac{1}{c_n^{|{\bm s}\cup {\bm t}|-1}}   &   \text{ if } &  \left|{\bm s}\cup {\bm t}\right| \leq 2|V(H)|-1.
\end{array}
\right.
\end{align*}

\item[(b)]
For $\bm s, \bm t \in V(G_n)_{|V(H)|}$, 
\begin{align*}
\E Z_{\bm s} Z_{\bm t}=\left\{
\begin{array}{ccc}
\frac{1}{c_n^{|{\bm s}\cup {\bm t}|-1}}-\frac{1}{c_n^{2 |V(H)|-2}}  &  \text{ if } &  \left|{\bm s}\cup {\bm t}\right| \leq 2|V(H)| -2  \\
0  &   \text{ if } &  \left|{\bm s}\cup {\bm t}\right| \in \{2|V(H)|-1, 2|V(H)|\}.
\end{array}
\right.
\end{align*}

\item[(c)]
If $t(H,W)>0$, then $$\Var(T(H, G_n)) \gtrsim  \max\left(\frac{|V(G_n)|^{|V(H)|}}{c_n^{|V(H)|-1}},\frac{|V(G_n)|^{2|V(H)|-2}}{c_n^{2|V(H)|-3}}\right).$$

\end{enumerate}

\end{lem}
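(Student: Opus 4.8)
The plan for these is direct computation. For (a) I would condition on whether the tuples intersect, observing that $\bm 1\{X_{=\bm s}\}\bm 1\{X_{=\bm t}\}$ is the indicator that all vertices of $\bm s$ share a common color \emph{and} all vertices of $\bm t$ share a common color. If $\bm s\cap\bm t=\emptyset$, i.e. $|\bm s\cup\bm t|=2|V(H)|$, these two events involve disjoint vertex sets, hence are independent, each of probability $c_n^{-(|V(H)|-1)}$; their product is $c_n^{-(2|V(H)|-2)}=c_n^{-(|\bm s\cup\bm t|-2)}$. If $\bm s$ and $\bm t$ share a vertex, so $|\bm s\cup\bm t|\le 2|V(H)|-1$, then any shared vertex forces the common color of $\bm s$ to equal that of $\bm t$, whence the event is exactly that all $|\bm s\cup\bm t|$ vertices are monochromatic, of probability $c_n^{-(|\bm s\cup\bm t|-1)}$. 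This is the dichotomy in (a). Part (b) then follows instantly by writing $\E Z_{\bm s}Z_{\bm t}=\E\bm 1\{X_{=\bm s}\}\bm 1\{X_{=\bm t}\}-c_n^{-(2|V(H)|-2)}$ (using $\E\bm 1\{X_{=\bm s}\}=\E\bm 1\{X_{=\bm t}\}=c_n^{-(|V(H)|-1)}$) and substituting (a): when $|\bm s\cup\bm t|\in\{2|V(H)|-1,2|V(H)|\}$ the two terms cancel, while for $|\bm s\cup\bm t|\le 2|V(H)|-2$ one is left with $c_n^{-(|\bm s\cup\bm t|-1)}-c_n^{-(2|V(H)|-2)}$.

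\textbf{Part (c), setup and first term.} For the variance bound I would begin from the expansion in Definition \ref{defn:Zs},
\[
\Var(T(H, G_n)) = \sum_{\bm s, \bm t \in V(G_n)_{|V(H)|}} M_{G_n}(\bm s, H)\, M_{G_n}(\bm t, H)\, \E Z_{\bm s} Z_{\bm t}.
\]
The structural point from (b) is that every covariance is nonnegative: when $|\bm s\cup\bm t|\le 2|V(H)|-2$ we have $|\bm s\cup\bm t|-1\le 2|V(H)|-3$, so $c_n^{-(|\bm s\cup\bm t|-1)}\ge c_n^{-(2|V(H)|-2)}$, and the remaining pairs give $0$. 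Since also $M_{G_n}\ge 0$, every summand is nonnegative, and it suffices to lower bound by restricting to any convenient family of index pairs; I would exhibit two \emph{disjoint} families producing the two terms of the maximum. The first term comes from the diagonal $\bm s=\bm t$, where $|\bm s\cup\bm t|=|V(H)|$ and $\E Z_{\bm s}^2=c_n^{-(|V(H)|-1)}-c_n^{-(2|V(H)|-2)}\gtrsim c_n^{-(|V(H)|-1)}$ once $c_n^{|V(H)|-1}\ge 2$. Because the adjacency entries are $0/1$, $M_{G_n}(\bm s,H)^2=|Aut(H)|^{-2}\prod_{(a,b)\in E(H)}a_{s_a s_b}(G_n)$, so $\sum_{\bm s}M_{G_n}(\bm s,H)^2=|Aut(H)|^{-2}\hom_{\mathrm{inj}}(H,G_n)\gtrsim|V(G_n)|^{|V(H)|}\,t(H,W)$. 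Since $t(H,W)>0$, the diagonal contributes $\gtrsim |V(G_n)|^{|V(H)|}/c_n^{|V(H)|-1}$.

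\textbf{Part (c), second term.} The second term comes from pairs $\bm s,\bm t$ sharing exactly two vertices, so $|\bm s\cup\bm t|=2|V(H)|-2$; for these (b) gives $\E Z_{\bm s}Z_{\bm t}=c_n^{-(2|V(H)|-3)}-c_n^{-(2|V(H)|-2)}\gtrsim c_n^{-(2|V(H)|-3)}$ once $c_n\ge 2$. I would fix two labels $a,b\in V(H)$ and count only those pairs in which the positions $a,b$ of $\bm s$ are identified with the positions $a,b$ of $\bm t$ and all other coordinates are distinct. For such a pair, $M_{G_n}(\bm s,H)M_{G_n}(\bm t,H)>0$ exactly when the $2|V(H)|-2$ vertices of $\bm s\cup\bm t$ realize an injective homomorphism of the join graph $H^2_{(a,b)}$ of Definition \ref{defn:H2}; summing the product of weights over this family equals, up to the constant $|Aut(H)|^{-2}$, $\hom_{\mathrm{inj}}(H^2_{(a,b)},G_n)\gtrsim |V(G_n)|^{2|V(H)|-2}\,t(H^2_{(a,b)},W)$. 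By Lemma \ref{lm:count}, $t(H^2_{(a,b)},W)>0$ since $t(H,W)>0$, so this family contributes $\gtrsim |V(G_n)|^{2|V(H)|-2}/c_n^{2|V(H)|-3}$. As the diagonal and the ``share exactly two'' families are disjoint and all summands are nonnegative, adding the two lower bounds yields $\Var(T(H,G_n))\gtrsim\max\big(|V(G_n)|^{|V(H)|}/c_n^{|V(H)|-1},\,|V(G_n)|^{2|V(H)|-2}/c_n^{2|V(H)|-3}\big)$.

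\textbf{Main obstacle.} Parts (a), (b), and the diagonal half of (c) are routine. The crux is the second term of (c): one must organize the restricted double sum so that it is recognized precisely as an injective homomorphism count of $H^2_{(a,b)}$, verifying that identifying the positions $(a,b)$ across $\bm s$ and $\bm t$ produces exactly the edge set described in Definition \ref{defn:H2}, and then invoke Lemma \ref{lm:count} to guarantee that this density does not degenerate in the limit. The bookkeeping of automorphism factors and the choice of the shared labels only affect multiplicative constants absorbed into $\gtrsim$. I would also note in passing that both lower bounds require $c_n$ bounded below (say $c_n\ge 2$), which is harmless in the regime of interest where $c_n$ is large.
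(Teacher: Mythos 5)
Your proposal is correct and follows essentially the same route as the paper: parts (a) and (b) by the same direct computation, and part (c) by noting all covariance terms are nonnegative, lower-bounding the diagonal via $\hom_{\mathrm{inj}}(H,G_n)$ and the pairs sharing exactly two coordinates via $\hom_{\mathrm{inj}}(H^2_{(a,b)},G_n)$ together with Lemma \ref{lm:count}. The only cosmetic difference is that you track the harmless $c_n\ge 2$ condition and the $|Aut(H)|$ constants slightly more explicitly than the paper does.
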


\begin{proof}  If $\left|{\bm s}\cup {\bm t}\right|=2|V(H)|$, the indices ${\bm s},{\bm t}$ do not intersect. In this case, the expectation factorizes by independence, and $$\E \bm 1\{X_{=\bm s}\} \bm 1\{X_{=\bm t}\}=\E \bm 1\{X_{=\bm s}\}\E \bm 1\{X_{=\bm t}\}=\frac{1}{c_n^{2|V(H)|-2}}.$$
Otherwise, 
$$\E \bm 1\{X_{=\bm s}\}\bm 1\{X_{=\bm t}\}=\P(X_{s_1}=\cdots=X_{s_{|V(H)|}}=X_{t_1}=\cdots=X_{t_{|V(H)|}})=\frac{1}{c_n^{|{\bm s}\cup {\bm t}|-1}},$$
completing the proof of (a). The result in (b) follows from (a) and observing that $\E \bm 1\{X_{=\bm s}\}\E \bm 1\{X_{=\bm t}\}=\frac{1}{c_n^{2|V(H)|-2}}$.

To show (c) note that, by \eqref{eq:MGn}, 
\begin{align}\label{eq:varTHGn}
\Var&(T(H, G_n)) \nonumber \\ 
=&\sum_{{\bm s}\in V(G_n)_{|V(H)|}} \frac{M_{G_n}(\bm s, H)}{|Aut(H)|} \E Z_{\bm s}^2 + \sum_{{\bm s \ne \bm t}\in V(G_n)_{|V(H)|}} M_{G_n}(\bm s, H) M_{G_n}(\bm t, H) \E Z_{\bm s} Z_{\bm t}, 
\end{align}
using $ M_{G_n}(\bm s, H)^2=\frac{1}{|Aut(H)|^2} \prod_{(a,b) \in E(H)}a_{s_a s_b}(G_n)=\frac{1}{|Aut(H)|}M_{G_n}(\bm s, H)$. Now, since each of terms  in the covariance is non-negative by part (b), 
$$\Var(T(H, G_n)) \gtrsim_H \sum_{{\bm s}\in V(G_n)_{|V(H)|}} M_{G_n}(\bm s, H) \E Z_{\bm s}^2  \gtrsim_H  \frac{N(H,G_n)}{c_n^{|V(H)|-1}}\gtrsim \frac{|V(G_n)|^{|V(H)|}}{c_n^{|V(H)|-1}},$$ since $\frac{N(H,G_n)}{|V(G_n)|^{|V(H)|}}=(1+o(1))\frac{t_{\mathrm{inj}}(H,G_n)}{|Aut(H)|} \rightarrow \frac{t(H, W)}{|Aut(H)|} >0$, when $G_n$ converges to $W$. 

Another way to lower bound $\Var(T(H, G_n))$ is to use that the first term in \eqref{eq:varTHGn} is non-negative, and to consider only the sum over indices $|{\bm s}\cap {\bm t}|=2$ in the second term. The sum over such pairs ${\bm s},{\bm t} \in V(G_n)_{|V(H)|}$, that is, ${\bm s}=(s_1, s_2, s_3,\cdots,s_{|V(H)|}), {\bm t}=(s_1, s_2, t_3,\cdots, t_{|V(H)|})$ with the indices $\{s_1, s_2, s_3,\cdots,s_{|V(H)|}, $ $t_3,\cdots, t_{|V(H)|}\}$ all distinct gives 
$$\Var(T(H, G_n))\gtrsim_H \frac{N(H^2_{(1, 2)},G_n)}{c_n^{2|V(H)|-3}}\gtrsim \frac{|V(G_n)|^{2|V(H)|-2}}{c_n^{2|V(H)|-3}},$$ 
where the last step uses $\lim_{n \rightarrow \infty}\frac{1}{|V(G_n)|^{2|V(H)|-2}}N(H^2_{(1, 2)},G_n) \gtrsim t(H^2_{(1, 2)}, W) >0$ by Lemma \ref{lm:count}. Combining these two estimates give the desired lower bound on the variance. \end{proof}

In the following two lemmas we estimate the variance and covariance of the product of $Z_{\bm s}$ for 3 or 4 sets $\bm s \in V(G_n)_{|V(H)|}$, respectively. These will be used to control the error terms in Stein's method. 

\begin{lem}\label{lm:third} Let ${{\bm s}_1},{{\bm s}_2}, {{\bm s}_3}\in V(G_n)_{|V(H)|}$ be such that $\min\left\{|{{\bm s}_1}\bigcap {{\bm s}_2}|,|{{\bm s}_1}\bigcap {{\bm s}_3}|\right\}\ge 1$. Then the following hold:

\begin{enumerate}
\item[(a)]
If $|\bigcup_{a=1}^3 {\bm s}_a|=3|V(H)|-2$, then $\E (|Z_{{\bm s}_1}|Z_{{\bm s}_2}Z_{{\bm s}_3})=0.$

\item[(b)]
If $|\bigcup_{a=1}^3 {\bm s}_a|\le 3|V(H)|-2$, then $\E |Z_{{\bm s}_1}Z_{{\bm s}_2}Z_{{\bm s}_3}|\le \frac{8}{c_n^{|\bigcup_{a=1}^3 {\bm s}_a|-1}}$.

\end{enumerate}
\end{lem}

\begin{proof} We begin with the proof of case (a). First, we show that $\left|{{\bm s}_2}\bigcap \left({{\bm s}_1}\bigcup{{\bm s}_3}\right)\right|\le 1$. To see this, observe 
\begin{align*}
3|V(H)|-2=\left| \bigcup_{a=1}^3 {{\bm s}_a} \right|& =|V(H)|+\left|{{\bm s}_1}\bigcup {{\bm s}_3}\right|-\left|({{\bm s}_1}\bigcup {{\bm s}_3})\bigcap {{\bm s}_2}\right| \\
& \le |V(H)|+2|V(H)|-1-\left|({{\bm s}_1}\bigcup {{\bm s}_3})\bigcap {{\bm s}_2}\right|,
\end{align*}
which implies $\left|{{\bm s}_2}\bigcap \left({{\bm s}_1}\bigcup{{\bm s}_3}\right)\right|\le 1$. Therefore, $\left|{{\bm s}_2}\bigcap \left({{\bm s}_1}\bigcup{{\bm s}_3}\right)\right| = 1$, since $\min\left\{|{{\bm s}_1}\bigcap {{\bm s}_2}|,|{{\bm s}_1}\bigcap {{\bm s}_3}|\right\}\ge 1$. Let ${{\bm s}_2}\bigcap ({{\bm s}_1}\bigcup {{\bm s}_3})=\{j\}$, for some $j \in [|V(G_n)|]$. Then 
\begin{align*}
\E(|Z_{{\bm s}_1}|Z_{{\bm s}_2}Z_{{\bm s}_3})=\E\big(\E (Z_{{\bm s}_2}|X_j,\{X_i, i\in [|V(G_n)|]\backslash {\bm s}_2\} )|Z_{{\bm s}_1}|Z_{{\bm s}_3}\big)=0,
\end{align*}
since $\P(X_{j}=X_{q_2}\cdots=X_{q_{|V(H)|}}|X_j)=\frac{1}{c_n^{|V(H)|-1}}$, for any $q_2, q_3, \ldots q_{|V(H)|} \in V(G_n)\backslash \{j\}$ distinct. This completes the proof of (a).

Next, we prove case (b). By a direct expansion, it follows that 
\begin{align}\label{eq:b3} 
\E |Z_{{\bm s}_1}Z_{{\bm s}_2}Z_{{\bm s}_3}| & \le \E \bm 1\{X_{={\bm s}_1}\}\bm 1\{X_{={\bm s}_2}\}\bm 1\{X_{={\bm s}_3}\}  +\frac{4}{c_n^{3|V(H)|-3}} + \frac{\sum_{1\leq a< b \leq 3}\E \bm 1\{X_{={\bm s}_a}\}\bm 1\{X_{={\bm s}_b}\}}{c_n^{|V(H)|-1}} \nonumber \\
&=\frac{5}{c_n^{|\bigcup_{a=1}^3 {\bm s}_a|-1}} + \frac{\sum_{1\leq a< b \leq 3}\E \bm 1\{X_{={\bm s}_a}\}\bm 1\{X_{={\bm s}_b}\}}{c_n^{|V(H)|-1}},
\end{align}
since $|\bigcup_{a=1}^3 {\bm s}_a|\le 3|V(H)|-2$. To bound the second term in the RHS above, note that 
\begin{align}\label{eq:ub1}
\left|\bigcup_{a=1}^3 {\bm s}_a\right|=\left|{{\bm s}_1}\bigcup {{\bm s}_2}\right|+|V(H)|-\left|({{\bm s}_1}\bigcup {{\bm s}_2})\bigcap {{\bm s}_3} \right|\le \left|{{\bm s}_1}\bigcup {{\bm s}_2}\right|+|V(H)|-1,
\end{align} 
since $|{{\bm s}_3}\bigcap ({{\bm s}_1}\bigcup {{\bm s}_2})|\ge 1$. Then Lemma \ref{lm:var}(a) gives 
\begin{align}
\frac{1}{c_n^{|V(H)|-1}}\E \bm 1\{X_{={\bm s}_1}\}\bm 1\{X_{={\bm s}_2}\}&= \frac{1}{c_n^{|V(H)|-1}}\max\left(\frac{1}{c_n^{2|V(H)|-2}},\frac{1}{c_n^{|{{\bm s}_1}\bigcup {{\bm s}_2}|-1}}\right) \nonumber \\
& \le \max\left( \frac{1}{c_n^{3|V(H)|-3}}, \frac{1}{c_n^{|{{\bm s}_1}\bigcup {{\bm s}_2} \bigcup {{\bm s}_3}|}} \right) \tag*{(by \eqref{eq:ub1})} \nonumber \\
& \le \frac{1}{c_n^{|\bigcup_{a=1}^3 {\bm s}_a|-1}} \tag*{(using $|\bigcup_{a=1}^3 {\bm s}_a|\le 3|V(H)|-2$).}
\end{align}
Similar estimates holds for the pairs $({{\bm s}_1},{{\bm s}_3})$ and $({{\bm s}_2},{{\bm s}_3})$ as well, and, therefore, the RHS of \eqref{eq:b3} is bounded by $\frac{8}{c_n^{|\bigcup_{a=1}^3 {\bm s}_a|-1}}$.
\end{proof}

\begin{lem}\label{lm:cov}

Let ${{\bm s}_1},{{\bm s}_2},{{\bm s}_3},{{\bm s}_4}\in V(G_n)_{|V(H)|}$ be such that $\min\{|{{\bm s}_1}\bigcap {{\bm s}_2}|,|{{\bm s}_3}\bigcap {{\bm s}_4}|\}\ge 1$. Then the following hold: 
\begin{enumerate}
\item[(a)]
If $|\bigcup_{a=1}^4 {\bm s}_a|\in \{4|V(H)|-2,4|V(H)|-3\}$, then $\Cov(Z_{{\bm s}_1}Z_{{\bm s}_2},Z_{{\bm s}_3}Z_{{\bm s}_4})=0$.

\item[(b)]
If $|\bigcup_{a=1}^4 {\bm s}_a| \le 4 |V(H)| -4$, then $\Cov(Z_{{\bm s}_1}Z_{{\bm s}_2},Z_{{\bm s}_3}Z_{{\bm s}_4})\lesssim \frac{1}{c_n^{|\bigcup_{a=1}^4 {\bm s}_a|-1}}$.

\end{enumerate}

\end{lem}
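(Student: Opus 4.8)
The plan is to reduce both parts to a single geometric dichotomy governed by the two ``super-blocks'' $A := \bm s_1 \cup \bm s_2$ and $B := \bm s_3 \cup \bm s_4$. The hypothesis $\min\{|\bm s_1 \cap \bm s_2|,|\bm s_3 \cap \bm s_4|\}\ge 1$ gives $|A|,|B|\le 2|V(H)|-1$ and, more importantly, means that $Z_{\bm s_1}Z_{\bm s_2}$ is a function of the colors $\{X_v:v\in A\}$ alone while $Z_{\bm s_3}Z_{\bm s_4}$ is a function of $\{X_v:v\in B\}$ alone. The first thing I would record is that whenever $A\cap B=\emptyset$ these two random variables depend on disjoint, hence independent, families of colors, so $\Cov(Z_{\bm s_1}Z_{\bm s_2},Z_{\bm s_3}Z_{\bm s_4})=0$.

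For part (a) I would show that the size hypothesis forces this disjointness, up to at most one shared vertex. Writing $u:=|\bigcup_{a=1}^4 \bm s_a|=|A|+|B|-|A\cap B|$ and using $|A|,|B|\le 2|V(H)|-1$, the value $u=4|V(H)|-2$ forces $A\cap B=\emptyset$, while $u=4|V(H)|-3$ forces $|A\cap B|\le 1$. The disjoint cases are settled by the independence above, so the only remaining possibility is $A\cap B=\{j\}$, which additionally pins down $|A|=|B|=2|V(H)|-1$. Here I would condition on $X_j$: because $A\cap B=\{j\}$, the two products are conditionally independent given $X_j$, whence $\E[Z_{\bm s_1}Z_{\bm s_2}Z_{\bm s_3}Z_{\bm s_4}]=\E\big[\E[Z_{\bm s_1}Z_{\bm s_2}\mid X_j]\,\E[Z_{\bm s_3}Z_{\bm s_4}\mid X_j]\big]$. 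The key computation is $\E[Z_{\bm s_1}Z_{\bm s_2}\mid X_j]=0$: since $|A|=2|V(H)|-1$, the event $\{X_{=\bm s_1}\}\cap\{X_{=\bm s_2}\}$ is exactly that all of $A$ is monochromatic, with conditional probability $c_n^{-(2|V(H)|-2)}$ no matter the value of $X_j$, and similarly $\E[\bm 1\{X_{=\bm s_i}\}\mid X_j]=c_n^{-(|V(H)|-1)}$ for $i=1,2$; plugging these into the expansion of $(\bm 1\{X_{=\bm s_1}\}-c_n^{-(|V(H)|-1)})(\bm 1\{X_{=\bm s_2}\}-c_n^{-(|V(H)|-1)})$ makes all four terms cancel. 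Together with $\E Z_{\bm s_1}Z_{\bm s_2}=0$ from Lemma \ref{lm:var}(b) this yields $\Cov=0$.

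For part (b), the case $A\cap B=\emptyset$ again gives covariance $0$, so I may assume $A\cap B\ne\emptyset$. Setting $p:=c_n^{-(|V(H)|-1)}$ and $I_a:=\bm 1\{X_{=\bm s_a}\}$, the pointwise bound $|I_a-p|\le I_a+p$ gives
\[
\E\Big|\prod_{a=1}^4 Z_{\bm s_a}\Big|\le \E\prod_{a=1}^4 (I_a+p)=\sum_{S\subseteq\{1,2,3,4\}} p^{\,4-|S|}\,\E\prod_{a\in S}I_a .
\]
The arithmetic engine is the identity $\E\prod_{a\in S}I_a=c_n^{-d(S)}$, where $d(S):=|V(S)|-k(S)$, $V(S):=\bigcup_{a\in S}\bm s_a$, and $k(S)$ is the number of connected components of the intersection graph on the blocks $\{\bm s_a:a\in S\}$ (this generalizes Lemma \ref{lm:var}(a)). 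I would then prove the subadditive estimate $d(\{1,2,3,4\})\le d(S)+(|V(H)|-1)(4-|S|)$ by observing that adjoining a single block to any block-set raises $d$ by at most $|V(H)|-1$. Because $\bm s_1\cap\bm s_2\ne\emptyset$, $\bm s_3\cap\bm s_4\ne\emptyset$, and $A\cap B\ne\emptyset$ together make the full intersection graph connected, we have $d(\{1,2,3,4\})=u-1$, so every summand is at most $c_n^{-(u-1)}$ and the $16$ of them give $\E|\prod_a Z_{\bm s_a}|\le 16\,c_n^{-(u-1)}$. Finally, since $\E Z_{\bm s_1}Z_{\bm s_2}\ge 0$ and $\E Z_{\bm s_3}Z_{\bm s_4}\ge 0$ by Lemma \ref{lm:var}(b), the subtracted product of means is nonnegative, hence $\Cov\le \E\prod_{a=1}^4 Z_{\bm s_a}\le \E|\prod_{a=1}^4 Z_{\bm s_a}|\le 16\,c_n^{-(u-1)}$.

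I expect the crux to be the part (b) bound, namely ensuring that no cross term in the expansion beats the leading order $c_n^{-(u-1)}$. The naive accounting ``each extra block costs a full factor $c_n^{-(|V(H)|-1)}$'' is too lossy when blocks overlap, and the correct bound needs the connectivity-aware subadditivity of $d(\cdot)$ (a block that bridges several components contributes fewer new vertices, exactly compensating the drop in $k$). Noticing that the product-of-means term is nonnegative, and so may simply be dropped for the one-sided inequality being claimed, is what keeps the constant at $16$ and spares a separate estimate of that term.
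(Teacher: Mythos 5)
Your proof is correct, and for part (b) it takes a genuinely different route from the paper's. For part (a) the difference is mild: the paper conditions on the single vertex at which $\bm s_2$ meets $\bm s_1\cup\bm s_3\cup\bm s_4$ and uses that $Z_{\bm s_2}$ has conditional mean zero given everything else, whereas you condition on the unique vertex of $(\bm s_1\cup\bm s_2)\cap(\bm s_3\cup\bm s_4)$, invoke conditional independence of the two products, and check $\E[Z_{\bm s_1}Z_{\bm s_2}\mid X_j]=0$ directly from $|\bm s_1\cup\bm s_2|=2|V(H)|-1$; your version avoids the slightly delicate without-loss-of-generality step in the paper. For part (b) the paper expands $\prod_a(I_a-p)$ and bounds each triple moment (times $p$) and each pair moment (times $p^2$) individually, using the hypothesis $|\bigcup_a\bm s_a|\le 4|V(H)|-4$ to absorb the $c_n^{-(4|V(H)|-4)}$ terms, and then separately bounds the product of means; this yields $15+1=16$. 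You instead prove the exact formula $\E\prod_{a\in S}I_a=c_n^{-(|V(S)|-k(S))}$ via connected components of the intersection graph, establish the subadditivity $d(\text{full})\le d(S)+(|V(H)|-1)(4-|S|)$ by adding one block at a time, and observe that the product of means is nonnegative (Lemma \ref{lm:var}(b)) and can simply be dropped for the one-sided bound. Your accounting is more systematic and arguably cleaner — it makes transparent why no cross term exceeds $c_n^{-(u-1)}$, and it does not actually need the hypothesis $u\le 4|V(H)|-4$ for the moment bound — while the paper's is more hands-on but self-contained given its Lemma \ref{lm:var}(a). Both land on the same constant $16$.
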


\begin{proof} We begin with the proof of case (a). First, consider $|\bigcup_{a=1}^4 {\bm s}_a|=4|V(H)|-2$, then the index sets ${{\bm s}_1}\bigcup {{\bm s}_2}$ and ${{\bm s}_3}\bigcup {{\bm s}_4}$ are disjoint, and so, $\Cov(Z_{{\bm s}_1}Z_{{\bm s}_2},Z_{{\bm s}_3}Z_{{\bm s}_4})=0$. 

If $|\bigcup_{a=1}^4 {\bm s}_a|=4|V(H)|-3$ and $|({{\bm s}_1}\bigcup {{\bm s}_2})\bigcap ({{\bm s}_3}\bigcup {{\bm s}_4})| = 0$,  then the index sets ${{\bm s}_1}\bigcup {{\bm s}_2}$ and ${{\bm s}_3}\bigcup {{\bm s}_4}$ are disjoint, and $\Cov(Z_{{\bm s}_1}Z_{{\bm s}_2},Z_{{\bm s}_3}Z_{{\bm s}_4})=0$. Then the assumptions $|\bigcup_{a=1}^4 {\bm s}_a|=4|V(H)|-3$ and $\min\{|{{\bm s}_1}\bigcap {{\bm s}_2}|,|{{\bm s}_3}\bigcap {{\bm s}_4}|\}\ge 1$ implies, $|({{\bm s}_1}\bigcup {{\bm s}_2})\bigcap ({{\bm s}_3}\bigcup {{\bm s}_4})| = 1$. In this case, we must also have $|{{\bm s}_1}\bigcup {{\bm s}_2}|=|{{\bm s}_3}\bigcup {{\bm s}_4}|=2|V(H)|-1$, which implies 
\begin{align}\label{eq:e1}
\E Z_{{\bm s}_1}Z_{{\bm s}_2}=\E Z_{{\bm s}_3}Z_{{\bm s}_4}=0
\end{align} 
by Lemma \ref{lm:var}(a). Now, because $|({{\bm s}_1}\bigcup {{\bm s}_2})\bigcap ({{\bm s}_3}\bigcup {{\bm s}_4})|=1$, one of the sets ${{\bm s}_1 }\bigcap ({{\bm s}_3}\bigcup {{\bm s}_4})$ and ${{\bm s}_2 }\bigcap ({{\bm s}_3}\bigcup {{\bm s}_4})$ is non-empty.  Assuming, without loss of generality, that ${{\bm s}_1}\bigcap ({{\bm s}_3}\bigcup {{\bm s}_4})$ is non-empty, gives
 $$\left|{{\bm s}_1}\bigcup {{\bm s}_3}\bigcup {{\bm s}_4}\right|=|{{\bm s}_1}|+\left|{{\bm s}_3}\bigcup {{\bm s}_4}\right|-\left|{{\bm s}_1}\bigcap({{\bm s}_3}\bigcup {{\bm s}_4})\right|\leq |V(H)|+2|V(H)|-1-1=3|V(H)|-2,$$ 
 and $$1\leq \left|{{\bm s}_2}\bigcap ({{\bm s}_1}\bigcup {{\bm s}_3}\bigcup {{\bm s}_4})\right|=|V(H)| + \left|{{\bm s}_1}\bigcup {{\bm s}_3}\bigcup {{\bm s}_4}\right| -(4|V(H)|-3) \leq 1.$$ Denoting ${{\bm s}_2}\bigcap ({{\bm s}_1}\bigcup {{\bm s}_3}\bigcup {{\bm s}_4})=\{j\}$, then gives 
\begin{align}\label{eq:e2}
\E Z_{{\bm s}_1}Z_{{\bm s}_2}Z_{{\bm s}_3}Z_{{\bm s}_4}=\E \{ \E( Z_{{\bm s}_2}|X_j,\{X_i, i \in [|V(G_n)|]\backslash {{\bm s}_2}\}) Z_{{\bm s}_1}Z_{{\bm s}_3}Z_{{\bm s}_4}\}=0.
\end{align}
Combining \eqref{eq:e1} and \eqref{eq:e2}, $\Cov(Z_{{\bm s}_1}Z_{{\bm s}_2},Z_{{\bm s}_3}Z_{{\bm s}_4})=0$, whenever $|\bigcup_{a=1}^4 {\bm s}_a|=4|V(H)|-3$. This completes the proof of (a).

Next, we prove case (b). Without loss of generality, assume $|({{\bm s}_1}\bigcup {{\bm s}_2})\bigcap({{\bm s}_3}\bigcup {{\bm s}_4})|\ge 1$, because otherwise the covariance is $0$ to begin with. As in Lemma \ref{lm:third}, it suffices to find bounds up to fourth joint moments of $\bm 1\{X_{=\bm s}\}$. To this end, we first claim that 
\begin{align}\label{eq:3_III}
\frac{1}{c_n^{|V(H)|-1}}\E \bm 1\{X_{={\bm s}_1}\}\bm 1\{X_{={\bm s}_2}\}\bm 1\{X_{={\bm s}_3}\}& 
\leq \frac{1}{c_n^{ |\bigcup_{a=1}^4 {\bm s}_a|-1}},
\end{align}
and a similar bound holds for all other three triples. For proving \eqref{eq:3_III}, note that $|(\bigcup_{a=1}^3 {\bm s}_a)\bigcap {{\bm s}_4}|\ge |{{\bm s}_3}\bigcap {{\bm s}_4}|\ge 1$ which gives
\begin{align}\label{eq:sunion_I}
 \left|\bigcup_{a=1}^4 {\bm s}_a\right|=\left|\bigcup_{a=1}^3 {\bm s}_a\right|+|V(H)|-\left|\left(\bigcup_{a=1}^3 {\bm s}_a\right)\bigcap {{\bm s}_4}\right|\le \left|\bigcup_{a=1}^3 {\bm s}_a\right|+|V(H)|-1.
 \end{align} 
Now, there are two cases:  
\begin{itemize}

\item $|({{\bm s}_1}\bigcup {{\bm s}_2})\bigcap \bm s_3|\ge 1$: In this case, by \eqref{eq:sunion_I},  
\begin{align}\label{eq:3_N}
\frac{1}{c_n^{|V(H)|-1}}\E \bm 1\{X_{={\bm s}_1}\}\bm 1\{X_{={\bm s}_2}\}\bm 1\{X_{={\bm s}_3}\}=\frac{1}{c_n^{|\bigcup_{a=1}^3 {\bm s}_a|+|V(H)|-2}}\le  \frac{1}{c_n^{ |\bigcup_{a=1}^4 {\bm s}_a|-1}},
\end{align}
and so we have verified \eqref{eq:3_III}.

\item $|({{\bm s}_1}\bigcup {{\bm s}_2})\bigcap \bm s_3|=0$: In this case, 
\begin{align*}
\frac{1}{c_n^{|V(H)|-1}}\E \bm 1\{X_{={\bm s}_1}\}\bm 1\{X_{={\bm s}_2}\}\bm 1\{X_{={\bm s}_3}\}
& =\frac{1}{c_n^{|\bm s_1 \cup \bm s_2|+2|V(H)|-3}}\le  \frac{1}{c_n^{ |\bigcup_{a=1}^4 {\bm s}_a|-1}},
\end{align*}
which verifies \eqref{eq:3_III}. Note that in the last inequality we have used the bound
\begin{align*}
\left|\bigcup_{a=1}^4{\bm s}_a \right| =&\left|{\bm s}_1\bigcup {\bm s}_2 \right|+\left|{\bm s}_3\bigcup {\bm s}_4 \right|-\left|({\bm s}_1\bigcup {\bm s}_2)\bigcap ({\bm s}_3\bigcup {\bm s}_4) \right| \nonumber \\ 
& \le \left|{\bm s}_1\bigcup {\bm s}_2 \right|+2|V(H)|-2, 
\end{align*}
using $|{\bm s}_3\bigcup {\bm s}_4| \leq 2|V(H)|-1$, since $|{\bm s}_3\bigcap {\bm s}_4|\geq 1$, and $|({{\bm s}_1}\bigcup {{\bm s}_2})\bigcap({{\bm s}_3}\bigcup {{\bm s}_4})|\ge 1$. 
\end{itemize}

Next, proceeding to bound  expectations of two tuples, observe that $|{{\bm s}_3}\bigcup {{\bm s}_4}|- |({{\bm s}_1}\bigcup {{\bm s}_2})\bigcap ({{\bm s}_3}\bigcup {{\bm s}_4})|\le 2|V(H)|-2$, (since $|({{\bm s}_1}\bigcup {{\bm s}_2})\bigcap ({{\bm s}_3}\bigcup {{\bm s}_4})|\ge 1, |{{\bm s}_1}\bigcup {{\bm s}_2}|\le 2|V(H)|-1.$) 
This implies 
\begin{align*}
 \left|\bigcup_{a=1}^4 {\bm s}_a\right|=\left|{{\bm s}_1}\bigcup{{\bm s}_2}\right|+\left|{{\bm s}_3}\bigcup {{\bm s}_4}\right|-\left|({{\bm s}_1}\bigcup {{\bm s}_2})\bigcap ({{\bm s}_3}\bigcup {{\bm s}_4})\right|\le\left|{{\bm s}_1}\bigcup {{\bm s}_2}\right|+2|V(H)|-2,
\end{align*}
and so
\begin{align}\label{eq:2}
\frac{1}{c_n^{2|V(H)|-2}}\E \bm 1\{X_{={\bm s}_1}\}\bm 1\{X_{={\bm s}_2}\} & \le \frac{1}{c_n^{2|V(H)|-2}}\max\left(\frac{1}{c_n^{2|V(H)|-2}},\frac{1}{c_n^{|{{\bm s}_1}\bigcup {{\bm s}_2}|-1}}\right) \nonumber \\ 
& \le \max\left( \frac{1}{c_n^{4|V(H)|-4}}, \frac{1}{c_n^{|\bigcup_{a=1}^4 {\bm s}_a|-1}} \right),
\end{align}
 and a similar bound applies for all the other five pairs. Thus, expanding the fourth moment and using \eqref{eq:3_III} and \eqref{eq:2} gives
\begin{align}\label{eq:41}
\E |Z_{{\bm s}_1}Z_{{\bm s}_2}Z_{{\bm s}_3}Z_{{\bm s}_4}|
\lesssim \frac{1}{c_n^{|\bigcup_{a=1}^4 {\bm s}_a|-1}}+\frac{1}{c_n^{4|V(H)|-4}}\lesssim \frac{1}{c_n^{|\bigcup_{a=1}^4 {\bm s}_a|-1}}.
\end{align}
Finally, by Lemma \ref{lm:var}(b), if $\max(|{{\bm s}_1}\bigcup {{\bm s}_2}|,|{{\bm s}_3}\bigcup {{\bm s}_4}|)\ge 2|V(H)|-1$, then $\E Z_{{\bm s}_1}Z_{{\bm s}_2}\E Z_{{\bm s}_3}Z_{{\bm s}_4}=0$. Thus, assume that $|{{\bm s}_1}\bigcup {{\bm s}_2}|\le 2|V(H)|-2, |{{\bm s}_3}\bigcup {{\bm s}_4}|\le 2|V(H)|-2$, and so 
\begin{align}\label{eq:24}
|\E Z_{{\bm s}_1}Z_{{\bm s}_2} \E Z_{{\bm s}_3} Z_{{\bm s}_4}|\le \frac{1}{c_n^{|{{\bm s}_1}\bigcup {{\bm s}_2}|+{{\bm s}_3}\bigcup {{\bm s}_4}|-2}}\le \frac{1}{c_n^{|\bigcup_{a=1}^4 {\bm s}_a|-1}},
\end{align}
where the last inequality uses the fact that $|({{\bm s}_1}\bigcup {{\bm s}_2})\bigcap ({{\bm s}_3}\bigcup {{\bm s}_4})|\ge 1$. Combining \eqref{eq:41} along with \eqref{eq:24} completes the proof of the lemma. 
\end{proof}

\begin{proof}[Proof of Theorem \ref{thm:clt}] Recall $Z(H, G_n)$ from \eqref{eq:Z}. Note  that (recalling \eqref{eq:MGn}), 
$$Z(H, G_n)=\frac{\sum_{{\bm s}\in V(G_n)_{|V(H)|}} \prod_{(a,b) \in E(H)}a_{s_a s_b}(G_n)  Z_{\bm s}}{|Aut(H)|\sqrt{\Var(T(H, G_n))}}=\frac{\sum_{{\bm s}\in \cA_{H}^{(n)}}   Z_{\bm s}}{|Aut(H)|\sqrt{\Var(T(H, G_n))}},$$ where $\cA_{H}^{(n)}$ be the set all  ${{\bm s}}\in V(G_n)_{|V(H)|}$ such that $\prod_{(a,b) \in E(H)}a_{s_a s_b}(G_n)=1$. 
Now, for every ${{\bm s}_1}\in \cA_{H}^{(n)}$ let $$N_{{\bm s}_1}:=\{{{\bm s}_2} \in \cA_{H}^{(n)} : |{{\bm s}_1}\bigcap {{\bm s}_2}|\ge 1\}.$$ In other word, $N_{{\bm s}_1}$ is the subset of tuples in $\cA_{H}^{(n)}$ which have at least one index common with ${{\bm s}_1}$. Then by the Stein's method based on dependency graphs (see proof of Lemma 1 in \cite{chatterjee_dg}), we have 
\begin{align}
Wass\left(Z(H, G_n), N(0,1)\right) \lesssim_H R_1 + R_2,
\label{eq:chatterjee}
\end{align}
where 
$$R_1=\left( \Var\left(\frac{\sum_{{{\bm s}_1}\in \cA_{H}^{(n)}}Z_{{{\bm s}_1}}\sum_{{{\bm s}_2}\in N_{{\bm s}_1}}Z_{{\bm s}_2}}{\sigma_n^2}\right) \right)^{\frac{1}{2}}, ~ R_2=\frac{\sum_{{{\bm s}_1}\in \cA_{H}^{(n)}}\E|Z_{{\bm s}_1}|\left(\sum_{{{\bm s}_2}\in N_{{\bm s}_1}}Z_{{\bm s}_2}\right)^2}{\sigma_n^3},$$
with $\sigma_n^2=\Var(T(H, G_n))$. 

We will bound each of the terms above separately. To begin with, observe 
\begin{align*}
\Var \Bigg(\frac{1}{\sigma_n^2}&\sum_{{{\bm s}_1}\in \cA_{H}^{(n)}}Z_{{\bm s}_1}\sum_{{{\bm s}_2}\in N_{{\bm s}_1}}Z_{{\bm s}_2}\Bigg) \\
&=\frac{1}{\sigma_n^4}\sum_{{{\bm s}_1}\in \cA_{H}^{(n)}}\sum_{{{\bm s}_2}\in N_{{{\bm s}_1}}}\sum_{{{\bm s}_3}\in \cA_{H}^{(n)}}\sum_{{{\bm s}_4}\in N_{{{\bm s}_3}}}\Cov({Z}_{{{\bm s}_1}}{Z}_{{{\bm s}_2}},{Z}_{{{\bm s}_3}}{Z}_{{{\bm s}_4}}). 
\end{align*}
Let $\ell=|\bigcup_{a=1}^4 {\bm s}_a|$ and use Lemma \ref{lm:cov}(a) to conclude that the above covariance vanishes unless $\ell\le 4|V(H)|-4$. Thus, using Lemma \ref{lm:cov}(b), an upper bound to the RHS above is given by 
\begin{align*}
\frac{1}{\sigma_n^4}\sum_{\ell=|V(H)|}^{4|V(H)|-4}\frac{|V(G_n)|^\ell}{c_n^{\ell-1}} & \lesssim_H\frac{1}{\sigma_n^4}\left(\frac{|V(G_n)|^{|V(H)|}}{c_n^{|V(H)|-1}}+\frac{|V(G_n)|^{4|V(H)|-4}}{c_n^{4|V(H)|-5}}\right) \\
& \lesssim \frac{|V(G_n)|^{|V(H)|}}{c_n^{|V(H)|-1}}\times \frac{c_n^{2|V(H)|-2}}{|V(G_n)|^{2|V(H)|}}+ \frac{|V(G_n)|^{4|V(H)|-4}}{c_n^{4|V(H)|-5}}\times \frac{c_n^{4|V(H)|-6}}{|V(G_n)|^{4|V(H)|-4}} \\
&=\frac{c_n^{|V(H)|-1}}{|V(G_n)|^{|V(H)|}}+\frac{1}{c_n},
\end{align*}
where the last inequality uses Lemma \ref{lm:var}(c). Therefore
\begin{align}\label{eq:cha1}
R_1^2 \lesssim_H \Var\left(\frac{1}{\sigma_n^2}\sum_{{{\bm s}_1}\in  \cA_{H}^{(n)}}Z_{{\bm s}_1}\sum_{{{\bm s}_2}\in N_{{\bm s}_1}}Z_{{\bm s}_2}\right)
\lesssim \frac{c_n^{|V(H)|-1}}{|V(G_n)|^{|V(H)|}}+\frac{1}{c_n}.
\end{align}

Proceeding to bound $R_2$ in \eqref{eq:chatterjee}, gives
\begin{align*}
R_2=\frac{1}{\sigma_n^3} \sum_{{{\bm s}_1}\in \cA_{H}^{(n)}}\E |Z_{{\bm s}_1}|\left(\sum_{{{\bm s}_2}\in N_{{\bm s}_1}}Z_{{\bm s}_2}\right)^2=\frac{1}{\sigma_n^3} &\sum_{{{\bm s}_1}\in \cA_{H}^{(n)}}\sum_{{{\bm s}_2},{{\bm s}_3}\in N_{{\bm s}_1}} \E |Z_{{\bm s}_1}|Z_{{\bm s}_2}Z_{{\bm s}_3}
\end{align*}
Again let $\ell=|\bigcup_{a=1}^3 {\bm s}_a|$ and use Lemma \ref{lm:third}(a) to conclude that the above vanishes when $\ell =3|V(H)|-2$.  Thus, using the bound in  Lemma \ref{lm:third}(b), an upper bound to the RHS above is 
\begin{align*}
\frac{1}{\sigma_n^3}\sum_{\ell=|V(H)|}^{3|V(H)|-3}\frac{|V(G_n)|^\ell}{c_n^{\ell-1}}&\lesssim \frac{1}{\sigma_n^3}\left(\frac{|V(G_n)|^{|V(H)|}}{c_n^{|V(H)|-1}}+\frac{|V(G_n)|^{3|V(H)|-3}}{c_n^{3|V(H)|-4}}\right) \\& \lesssim \frac{|V(G_n)|^{|V(H)|}}{c_n^{|V(H)|-1}}\times \frac{c_n^{\frac{3}{2}(|V(H)|-1)}}{|V(G_n)|^{\frac{3}{2}|V(H)|}}+ \frac{|V(G_n)|^{3|V(H)|-3}}{c_n^{3|V(H)|-4}}\times \frac{c_n^{\frac{3}{2}(2|V(H)|-3)}}{|V(G_n)|^{3|V(H)|-3}} \\
&= \left(\frac{c_n^{|V(H)|-1}}{|V(G_n)|^{|V(H)|}}\right)^{\frac{1}{2}}+\left(\frac{1}{c_n}\right)^{\frac{1}{2}},
\end{align*}
where again the last inequality uses Lemma \ref{lm:var}(c). Therefore, 
\begin{align}\label{eq:cha2}
R_2=\frac{1}{\sigma_n^3} \sum_{{{\bm s}_1}\in V(G_n)_{|V(H)|}}\E |Z_{{\bm s}_1}|\left(\sum_{{{\bm s}_2}\in N_{{\bm s}_1}}Z_{{\bm s}_2}\right)^2\lesssim  \left(\frac{c_n^{|V(H)|-1}}{|V(G_n)|^{|V(H)|}}\right)^{\frac{1}{2}}+\left(\frac{1}{c_n}\right)^{\frac{1}{2}}.
\end{align}
Combining \eqref{eq:chatterjee} with \eqref{eq:cha1} and \eqref{eq:cha2} completes the proof of \eqref{eq:cltW}.

To see that the error term in the RHS of \eqref{eq:cltW} goes to zero, first note that $\E T(H, G_n)=\frac{1}{|Aut(H)|} \cdot \frac{\hom_{\mathrm{inj}}(H, G_n)}{c_n^{|V(H)|-1}}$ (recall \eqref{eq:poissonexp}). Then using  $\frac{1}{|V(G_n)|^{|V(H)|}}\hom_{\mathrm{inj}}(H, G_n) \rightarrow t(H, W)$, as $G_n$ converges to the graphon $W$, it follows that  the first term in the RHS of Theorem \ref{thm:clt} goes to zero whenever $\E T(H, G_n) \rightarrow \infty$. Therefore, $Z(H, G_n) \dto N(0, 1)$, whenever $\E T(H, G_n) \rightarrow \infty$ and $c_n \rightarrow \infty$. 
\end{proof}

As noted in Remark \ref{rem:normal}, the asymptotic normality of $Z(K_2, H)$ holds for all graph sequences $\{G_n\}_{n \geq 1}$, as long as $\E T(K_2, H) \rightarrow \infty$ and $c_n \rightarrow \infty$.  However, we cannot expect the normality of $Z(H, G_n)$, for general graphs $H$, to extend beyond dense graphs, without further assumptions, as shown below:
 
\begin{example}\label{ex:normaldiamond}For $n\geq 1$, denote by $\cD_n=(V(\cD_n), E(\cD_n))$ the $n$-{\it pyramid}: $V(\cD_n)=\{a, b, c_1, c_2, \ldots, c_n\}$ and $$E(\cD_n)=\{(a, b), (a, c_1), (a, c_2), \ldots (a, c_n), (b, c_1), (b, c_2), \ldots (b, c_n)\}.$$ In other words, the $n$-pyramid is the complete 3-partite graph $K_{1, 1, n}$. (Note that the 1-pyramid is the triangle ($\cD_1=K_3$) and the 2-pyramid $\cD_2$ is the 4-cycle with a diagonal.) Now, let $G_n$ be the disjoint union of the $n$-pyramid $\cD_n$ and the complete bipartite graph $K_{n, n}$ and $H=K_3$ the triangle. Choose $c_n \rightarrow \infty$ such that $c_n=o(\sqrt n)$. In this case, $\E T(K_3, G_n)=\frac{n}{c_n^2} \rightarrow \infty$, but $\lim_{n \rightarrow \infty} t(K_3, G_n)=0$, therefore, Theorem \ref{thm:clt} does not apply. Moreover, as every triangle in $G_n$  must pass through the base vertices $a, b$ of the pyramid $\cD_n$, 
$$\P(T(K_3,G_n)>0 ) \leq \P(X_a=X_b)=\frac{1}{c_n} \rightarrow 0.$$
This implies, $T(K_3, G_n) \pto 0$,  and hence, $Z(K_3, G_n) \pto 0$, that is, $Z(K_3, G_n)$ does not converge to a non-degenerate limiting distribution.  
\end{example}

\section{Limiting Distribution for fixed number of colors}
\label{sec:cfixed}

In this section we derive the limiting distribution for the number of monochromatic subgraphs when the number of colors is fixed. 
The proof of Theorem \ref{thm:cfixed} is given in Section \ref{sec:cfixedpf}. Examples are discussed in Section \ref{sec:examplecfixed}.

\subsection{Proof of Theorem \ref{thm:cfixed}}
\label{sec:cfixedpf}

We begin with the following observation: 

\begin{obs}\label{obs:sumprod} For $\bm s \in V(G_n)_{|V(H)|}$, let $Z_{\bm s}=\bm 1\{X_{=\bm s}\}-\frac{1}{c^{|V(H)|-1}}$. Then 
\begin{align}\label{eq:product_J}
Z_{\bm s}=\sum_{a=1}^c  \sum_{\substack{J \subseteq V(H) \\ |J| \geq 2}} \frac{1}{c^{|V(H)|-|J|}}\prod_{j \in J}\left(\bm1\{X_{s_j}=a\}-\frac{1}{c}\right).
\end{align}
\end{obs}

\begin{proof} To begin with note that 
$$\frac{1}{c^{|V(H)|-1}} \sum_{a=1}^c  \sum_{\substack{J \subseteq V(H) \\ |J| =1 }} \prod_{j \in J}\left(\bm1\{X_{s_j}=a\}-\frac{1}{c}\right)=\frac{1}{c^{|V(H)|-1}} \sum_{a=1}^c  \sum_{v=1}^{|V(H)|} \left(\bm1\{X_{v}=a\}-\frac{1}{c}\right)=0,$$
since, for every $v \in V(G_n)$, $\sum_{a=1}^c\left(\bm1\{X_{v}=a\}-\frac{1}{c}\right)=0$. Therefore, denoting the RHS of \eqref{eq:product_J} by $\tilde Z_{\bm s}$, we get 
\begin{align}\label{eq:prodJ_I}
\tilde Z_{\bm s}=\sum_{a=1}^c  \sum_{\substack{J \subseteq V(H)}} \frac{1}{c^{|V(H)|-|J|}}\prod_{j \in J}\left(\bm1\{X_{s_j}=a\}-\frac{1}{c}\right)-\frac{1}{c^{|V(H)|-1}},
\end{align}
where the $\frac{1}{c^{|V(H)|-1}}$ adjustment cancels with the term corresponding to taking $J=\emptyset$ in the sum. Here, we use the convention that $\prod_{j\in J}\left(\bm1\{X_{s_j}=a\}-\frac{1}{c}\right)=1$ if $J=\phi$. Now, expanding the product $\prod_{j \in J}\left(\bm1\{X_{s_j}=a\}-\frac{1}{c}\right)$ in  the RHS above gives, 
\begin{align}\label{eq:prodJ_II}
\sum_{\substack{J \subseteq V(H)}} \frac{1}{c^{|V(H)|-|J|}} \prod_{j \in J}\left(\bm1\{X_{s_j}=a\}-\frac{1}{c}\right) & = \sum_{\substack{J \subseteq V(H)}} \frac{1}{c^{|V(H)|-|J|}} \sum_{J' \subseteq J} \frac{(-1)^{|J|-|J'|}}{c^{|J|-|J'|}} \prod_{j \in J'} \bm1\{X_{s_j}=a\} \nonumber \\ 
& = \sum_{\substack{J \subseteq V(H)}} \sum_{J' \subseteq J} \frac{(-1)^{|J|-|J'|}}{c^{|V(H)|-|J'|}} \prod_{j \in J'} \bm1\{X_{s_j}=a\}\nonumber \\ 
& = \sum_{s=0}^{|V(H)|}  \sum_{\substack{J \subseteq V(H) \\ |J| = s }} \sum_{J' \subseteq J} \frac{(-1)^{s-|J'|}}{c^{|V(H)|-|J'|}} X_a(J'),
\end{align}
where $X_a(J')=\prod_{j \in J'} \bm1\{X_{s_j}=a\}$.  The RHS of \eqref{eq:prodJ_II} can be rewritten as the weighted sum over subsets $J' \subseteq V(H)$. Note that the coefficient of $X(J')$ in the above sum is 
$$\sum_{s=|J'|}^{|V(H)|} 
{|V(H)|-|J'| \choose s-|J'|}  \frac{(-1)^{s-|J'|}}{c^{|V(H)|-|J'|}} = \frac{1}{c^{|V(H)|-|J'|}}  \sum_{t=0}^{|V(H)|-|J'|} {|V(H)|-|J'| \choose t} (-1)^t=0,$$
whenever $|V(H)| \ne |J'|$. This follows by noting that there is ${|V(H)|-|J'| \choose s-|J'|}$ ways of completing the given set $J'$ to a $s$-element subset of $|V(H)|$. Therefore, the only term in \eqref{eq:prodJ_II} which is non-zero corresponds to taking $J' = V(H)$, and \eqref{eq:prodJ_I} simplifies to 
\begin{align*}
\tilde Z_{\bm s}=\sum_{a=1}^c X_a(V(H)) - \frac{1}{c^{|V(H)|-1}}&=\sum_{a=1}^c \prod_{j=1}^{|V(H)|} \bm 1\{X_{s_j}=a\} -\frac{1}{c^{|V(H)|-1}}  \nonumber \\ 
&=\bm 1\{X_{=\bm s}\} -\frac{1}{c^{|V(H)|-1}}= Z_{\bm s},
\end{align*}
as required.
 \end{proof}

Using this observation, $T(H, G_n)$ can be written as a polynomial in the i.i.d. color vectors $\{(\bm1\{X_v=a\})_{a\in [c]}: v\in V(G_n)\}$. 
\begin{align}\label{eq:Tcenter}
 T(H, G_n)-\E(T(H, G_n)) 
 &= \sum_{{\bm s}\in V(G_n)_{|V(H)|}}M_{G_n}(\bm s, H) Z_{\bm s} \nonumber \tag*{(recall \eqref{eq:MGn})}\\
& = \sum_{\bm s \in V(G_n)_{|V(H)|}} M_{G_n}(\bm s, H) \sum_{a=1}^c \sum_{\substack{J \subseteq V(H) \\ |J| \geq 2}} \frac{1}{c^{|V(H)|-|J|}}  \prod_{j \in J}\left(\bm1\{X_{s_j}=a\}-\frac{1}{c}\right) \nonumber \\
& = \sum_{\substack{J \subseteq V(H) \\ |J| \geq 2}}  T_J(H, G_n), 
\end{align} 
where 
\begin{align}\label{eq:TJHGn}
T_J(H, G_n):=\sum_{a=1}^c \sum_{\bm s \in V(G_n)_{|V(H)|}} M_{G_n}(\bm s, H)  \frac{1}{c^{|V(H)|-|J|}}  \prod_{j \in J}\left(\bm1\{X_{s_j}=a\}-\frac{1}{c}\right).
\end{align}

\begin{lem}\label{lm:J2}
For every $J \subseteq V(H)$ such that $|J| \geq 3$, $T_J(H, G_n)=o_P(|V(G_n)|^{|V(H)|-1})$.
\end{lem}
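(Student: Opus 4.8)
The plan is to show that $T_J(H,G_n)$ has mean zero and second moment of order $O(|V(G_n)|^{2|V(H)|-|J|})$, and then to conclude via Chebyshev's inequality, the strict condition $|J|\ge 3$ being precisely what supplies the extra factor of $|V(G_n)|^{-1}$ needed for the $o_P$ statement.

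First I would record that $\E T_J(H,G_n)=0$. Since the coordinates of any $\bm s\in V(G_n)_{|V(H)|}$ are distinct and the color vectors are independent across vertices, the product $\prod_{j\in J}(\bm 1\{X_{s_j}=a\}-\tfrac1c)$ factorizes into independent factors, each of mean zero, and so has zero expectation. The main work is the second moment. Writing $Y_{v,a}:=\bm 1\{X_v=a\}-\tfrac1c$, I would expand
$$\E T_J(H,G_n)^2 = \frac{1}{c^{2(|V(H)|-|J|)}}\sum_{a,a'=1}^c \sum_{\bm s,\bm t\in V(G_n)_{|V(H)|}} M_{G_n}(\bm s,H)M_{G_n}(\bm t,H)\, \E\Big[\prod_{j\in J}Y_{s_j,a}\prod_{k\in J}Y_{t_k,a'}\Big].$$
The key observation is that, since the $Y_{v,a}$ are independent across distinct vertices $v$ and each has mean zero, the expectation factorizes over the vertices in $\{s_j:j\in J\}\cup\{t_k:k\in J\}$, and any vertex appearing in exactly one of the two products contributes a vanishing factor $\E Y_{v,a}=0$. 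Because the $J$-coordinates within each tuple are distinct, the expectation therefore vanishes unless the sets $\{s_j:j\in J\}$ and $\{t_k:k\in J\}$ coincide.

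I would then count the surviving pairs $(\bm s,\bm t)$: a common $J$-vertex set forces $|\bm s\cup\bm t|\le 2|V(H)|-|J|$, so there are at most $O(|V(G_n)|^{2|V(H)|-|J|})$ of them. Bounding the remaining ingredients, namely $M_{G_n}(\cdot,H)\le \tfrac{1}{|Aut(H)|}$, the surviving expectation by $1$ (as $|Y_{v,a}|\le 1$), and the prefactor together with the double sum over $a,a'$ by constants depending only on $c$ and $H$, yields
$$\E T_J(H,G_n)^2 \lesssim_{H,c} |V(G_n)|^{2|V(H)|-|J|}.$$
For $|J|\ge 3$ this is $O(|V(G_n)|^{2|V(H)|-3})=o(|V(G_n)|^{2|V(H)|-2})$, whence Chebyshev's inequality applied to the mean-zero variable $T_J(H,G_n)$ gives, for every $\varepsilon>0$, that $\P(|T_J(H,G_n)|>\varepsilon|V(G_n)|^{|V(H)|-1})=O(|V(G_n)|^{-1})\to 0$, i.e. $T_J(H,G_n)=o_P(|V(G_n)|^{|V(H)|-1})$.

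I expect the only nontrivial obstacle to be the bookkeeping in the counting step: carefully justifying that the second-moment sum is supported on pairs with coinciding $J$-vertex sets, and then correctly bounding the number of such pairs. Everything else (the mean-zero computation, the boundedness of individual factors, and the final Chebyshev step) is routine. One point worth flagging is that the lemma deliberately excludes $|J|=2$: the quadratic term gives $\E T_J(H,G_n)^2=O(|V(G_n)|^{2|V(H)|-2})$ and is exactly the dominant contribution treated separately in Lemma \ref{lm:delta2}, so the strict inequality $|J|\ge 3$ is essential for the $o_P$ bound.
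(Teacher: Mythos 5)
Your proposal is correct and follows essentially the same route as the paper: mean zero by independence of the centered color indicators, a second-moment bound obtained by observing that the expectation vanishes unless the $J$-vertex sets of the two tuples coincide (so the surviving pairs number $O(|V(G_n)|^{2|V(H)|-|J|})$), and then Chebyshev. The only cosmetic difference is that you make the Chebyshev step and the role of $|J|\ge 3$ explicit, which the paper leaves implicit.
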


\begin{proof} Fix $J \subseteq V(H)$ such that $|J| \geq 3$. To begin with note that 
\begin{align}\label{eq:expQ}
\E T_J(H, G_n)=\sum_{a=1}^c \sum_{\bm s \in V(G_n)_{|V(H)|}} M_{G_n}(\bm s, H)  \frac{1}{c^{|V(H)|-|J|}}   \E\left(\prod_{j \in J}\bm1\{X_{s_j}=a\}-\frac{1}{c}\right) =0, 
\end{align}
since $$ \E \left(\prod_{j \in J}\bm1\{X_{s_j}=a\}-\frac{1}{c}\right)= \prod_{j\in J}\E\left(\bm1\{X_{s_j}=a\}-\frac{1}{c}\right)=0,$$ since $\{\left(\bm1\{X_{s_j}=a\}-\frac{1}{c}\right): j\in J)\}$ is a collection of independent random variables.  

The second moment of $T_J(H, G_n)$ equals
\begin{align}\label{eq:exp2QI}
\sum_{a, a' \in [c]} \sum_{\bm s, \bm s' \in V(G_n)_{|V(H)|}}    \frac{M_{G_n}(\bm s, H) M_{G_n}(\bm s', H)}{c^{2|V(H)|-2|J|}}  \E \prod_{j \in J}\left(\bm1\{X_{s_j}=a\}-\frac{1}{c}\right)\left(\bm1\{X_{s'_j}=a'\}-\frac{1}{c}\right).
\end{align}
Now, if  there exists $s_0 \in \{s_j: j \in J\} \backslash \{s_j': j \in J\}$, then 
\begin{align*}
&\E \prod_{j \in J}\left(\bm1\{X_{s_j}=a\}-\frac{1}{c}\right)\left(\bm1\{X_{s'_j}=a'\}-\frac{1}{c}\right)\\
=&\E \prod_{j \in J, s_j \ne s_0} \left(\bm1\{X_{s_j}=a\}-\frac{1}{c}\right)  \prod_{j \in J}\left(\bm1\{X_{s'_j}=a'\}-\frac{1}{c}\right)\E \left(\bm1\{X_{s_0}=a\}-\frac{1}{c}\right)=0.
\end{align*}
Similarly, the expectation vanishes if $s_0 \in \{s_j': j \in J\} \backslash \{s_j: j \in J\}$. 

Now, consider $\bm s, \bm s' \in V(G_n)_{|V(H)|}$ such that $\{s_j: j \in J\} = \{s_j': j \in J\}$. 
Define
\begin{align*}
W_J(a, a')&:=\E \prod_{j \in J} \left(\bm1\{X_{s_j}=a\}-\frac{1}{c}\right)\left(\bm1\{X_{s'_j}=a'\}-\frac{1}{c}\right). 
\end{align*} 
Note that $|W_J(a, a')| \leq 1$, for all $a, a' \in [c]$. Therefore, \eqref{eq:exp2QI}  gives
\begin{align}\label{eq:exp2QII}
\E T_J(H, G_n)^2 & = \sum_{a, a' \in [c]} \sum_{\substack{\bm s, \bm s' \in V(G_n)_{|V(H)|}\\ \{s_j: j \in J\} = \{s_j': j \in J\}}} M_{G_n}(\bm s, H) M_{G_n}(\bm s', H)  \frac{W_J(a, a')}{c^{2|V(H)|-2|J|}}   \nonumber \\
& \lesssim_{c} \sum_{\substack{\bm s, \bm s' \in V(G_n)_{|V(H)|}\\ \{s_j: j \in J\} = \{s_j': j \in J\}}} M_{G_n}(\bm s, H) M_{G_n}(\bm s', H)  \nonumber \\
& = O(|V(G_n)|^{2|V(H)|-|J|}) =o(|V(G_n)|^{2|V(H)|-2}),
\end{align}
whenever $|J| \geq 3$. 

Combining \eqref{eq:expQ} and \eqref{eq:exp2QII} it follows that $T_J(H, G_n)=o_P(|V(G_n)|^{|V(H)|-1})$, whenever $|J| \geq 3$.
\end{proof}

\begin{defn}\label{defn:Mnuvij} Let $H$ be a labeled finite simple graph. Then, for $1 \leq u \ne v \leq |V(H)|$ and $1 \leq i \ne j \leq |V(G_n)|$, define $M_{u, v}(i, j, H, G_n)$ as the number of injective homomorphism $\phi: V(H)\rightarrow V(G_n)$ such that $\phi(u)=i$ and $\phi(v)=j$. More formally,
\begin{align}
& M_{u, v}(i, j, H, G_n) \nonumber \\
&=a^+_{ij,uv}(G_n) \sum_{\substack{\bm s\backslash \{s_u, s_v\} \\ \bm s \in V(G_n)_{|V(H)|} }}\prod_{x \in N_H(u)\backslash \{v\}}a_{i s_x}(G_n) \prod_{y \in N_H(v) \backslash \{u\}} a_{j s_y}(G_n) \prod_{(x, y) \in E(H\backslash \{u, v\}) } a_{s_x s_y}(G_n), \nonumber 
\end{align}
with $a^+_{ij,uv}(G_n)=a_{ij}(G_n)$ if $(u, v) \in E(H)$ and 1 otherwise, and the sum is over indices $\bm s\backslash \{s_u, s_v\}$, with $\bm s \in V(G_n)_{|V(H)|}$, which are distinct and belong to $[|V(G_n)|]\backslash \{i, j\}$. Note that $M_{u, v}(i, j, H, G_n)$ is, in general, not symmetric in $i, j$, but satisfies $M_{v, u}(i, j, H, G_n)=M_{u, v}(j, i, H, G_n)$.\footnote{
For example, when $H=K_{1, 2}$ is the 2-star with the central vertex labeled 1, then 
$M_{1, 2}(i, j, K_{1, 2}, G_n)=M_{1, 3}(i, j, K_{1, 2}, G_n) = a_{ij}(G_n) (d_{G_n}(i)-a_{ij}(G_n))$, where $d_{G_n}(i)$ is the degree of the vertex $i$ in $G_n$, and $M_{2, 3}(i, j, K_{1, 2}, G_n)= \sum_{k \ne \{i, j \}} a_{ik}(G_n) a_{jk}(G_n)$, the number of common neighbors of $i, j$.} Finally, define the symmetric {\it scaled $2$-point homomorphism matrix} as $((\overline {\bm B}_H(G_n)_{ij}))_{i, j \in [|V(G_n)|] }$ with 
\begin{align}\label{eq:BH}
\overline {\bm B}_H(G_n)_{ij} &:=\frac{1}{2 |Aut(H)| \cdot |V(G_n)|^{|V(H)|-1}} \cdot \sum_{ 1 \leq u \ne v \leq |V(H)| } M_{u, v}(i, j, H, G_n), 
\end{align}
for $1 \leq i \ne j \leq |V(G_n)|$.
\end{defn}

The following lemma shows that $\Gamma(H, G_n)$  is a sum of $c$ quadratic forms in terms of the  scaled $2$-point homomorphism matrix, up to $o_P(1)$ terms.

\begin{lem}\label{lm:delta2} Recall $\Gamma(H, G_n)$ from \eqref{eq:gamma} and define, 
\begin{align*}
\Gamma_2(H, G_n) & := \frac{1}{c^{|V(H)|-2}}  \sum_{a=1}^c \sum_{1\le i\ne j\le |V(G_n)|} \overline{\bm B}_H(G_n)_{ij}  \left(\bm1\{X_{i}=a\}-\frac{1}{c}\right)\left(\bm1\{X_{j}=a\}-\frac{1}{c}\right), 
\end{align*} 
where $\overline{\bm B}_H(G_n)$ is the 2-point homomorphism matrix as defined in \eqref{eq:BH}. Then, $\Gamma(H,G_n)=\Gamma_2(H,G_n)+o_P(1)$.
\end{lem}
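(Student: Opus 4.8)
The plan is to start from the polynomial expansion \eqref{eq:Tcenter}, which writes $T(H,G_n)-\E T(H,G_n) = \sum_{J\subseteq V(H),\,|J|\ge 2} T_J(H,G_n)$, and to isolate the contribution of the $|J|=2$ terms. Lemma \ref{lm:J2} already disposes of every term with $|J|\ge 3$: each such term is $o_P(|V(G_n)|^{|V(H)|-1})$, and there are only finitely many of them (at most $2^{|V(H)|}$), so their sum is also $o_P(|V(G_n)|^{|V(H)|-1})$. Hence, after dividing by $|V(G_n)|^{|V(H)|-1}$, it suffices to show that the $|J|=2$ part equals \emph{exactly} $|V(G_n)|^{|V(H)|-1}\Gamma_2(H,G_n)$, with no additional error term.

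First I would rewrite a single term $T_{\{u,v\}}(H,G_n)$ for a fixed $2$-element subset $\{u,v\}\subseteq V(H)$. Since $M_{G_n}(\bm s,H)=\frac{1}{|Aut(H)|}\prod_{(a,b)\in E(H)}a_{s_a s_b}(G_n)$, grouping the tuples $\bm s\in V(G_n)_{|V(H)|}$ according to the values $(s_u,s_v)=(i,j)$ yields $\sum_{\bm s:\,s_u=i,\,s_v=j}\prod_{(a,b)\in E(H)}a_{s_a s_b}(G_n)=M_{u,v}(i,j,H,G_n)$; this is precisely the content of Definition \ref{defn:Mnuvij}, where the factor $a^+_{ij,uv}(G_n)$ accounts for a possible edge $(u,v)$ and the remaining products decompose the edges of $H$ into those meeting $u$, those meeting $v$, and those inside $H\backslash\{u,v\}$. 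Consequently,
$$T_{\{u,v\}}(H,G_n)=\frac{1}{c^{|V(H)|-2}|Aut(H)|}\sum_{a=1}^c\sum_{1\le i\ne j\le |V(G_n)|}M_{u,v}(i,j,H,G_n)\left(\bm1\{X_i=a\}-\tfrac1c\right)\left(\bm1\{X_j=a\}-\tfrac1c\right).$$

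Next I would sum over the unordered pairs $\{u,v\}$. Because the factor $\left(\bm1\{X_i=a\}-\tfrac1c\right)\left(\bm1\{X_j=a\}-\tfrac1c\right)$ is symmetric in $(i,j)$, I would pass from the sum over unordered pairs to a sum over ordered pairs $(u,v)$ with $u\ne v$ at the cost of a factor $\tfrac12$, using $M_{v,u}(i,j,H,G_n)=M_{u,v}(j,i,H,G_n)$ together with the relabeling $i\leftrightarrow j$ to verify that both orderings contribute identically. The resulting inner coefficient is $\frac{1}{2|Aut(H)|}\sum_{1\le u\ne v\le|V(H)|}M_{u,v}(i,j,H,G_n)$, which by the definition \eqref{eq:BH} of the scaled $2$-point homomorphism matrix equals $|V(G_n)|^{|V(H)|-1}\,\overline{\bm B}_H(G_n)_{ij}$. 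Substituting this identifies $\sum_{|J|=2}T_J(H,G_n)$ with $|V(G_n)|^{|V(H)|-1}\Gamma_2(H,G_n)$, and combining with the $o_P$ bound on the higher-order terms gives $\Gamma(H,G_n)=\Gamma_2(H,G_n)+o_P(1)$.

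The computation is essentially exact bookkeeping, and I do not expect a genuine obstacle; the one place demanding care is matching the combinatorial constants, namely reconciling the unordered-subset indexing of the $T_J$ with the ordered-pair indexing and the factor $\tfrac{1}{2|Aut(H)|}$ built into $\overline{\bm B}_H$, and checking that the regrouping of the edge product of $H$ reproduces $M_{u,v}(i,j,H,G_n)$ precisely. Everything else follows directly from the expansion \eqref{eq:Tcenter} and Lemma \ref{lm:J2}.
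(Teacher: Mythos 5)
Your proposal is correct and follows essentially the same route as the paper: expand via \eqref{eq:Tcenter}, discard the $|J|\ge 3$ terms by Lemma \ref{lm:J2}, regroup the $|J|=2$ sum by the values $(s_u,s_v)=(i,j)$ to produce $M_{u,v}(i,j,H,G_n)$, and symmetrize over ordered pairs to recover $\overline{\bm B}_H(G_n)$ (the paper phrases this last step as $\overline{\bm B}_H(G_n)=\tfrac12(\bm B_H(G_n)+\bm B_H(G_n)^{T})$ rather than your unordered-to-ordered relabeling, but the bookkeeping is identical).
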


\begin{proof} 
Recalling \eqref{eq:TJHGn}, note that 
\begin{align*}
&c^{|V(H)|-2} \sum_{a=1}^c \sum_{\substack{J \subseteq V(H) \\ |J| = 2}}  T_J(H, G_n) \\
&=\sum_{a=1}^c  \sum_{1 \leq u < v \leq |V(H)|} \sum_{1 \leq s_{u} \ne s_{v} \leq |V(G_n)|} \frac{M_{u, v}(s_{u}, s_{v}, H, G_n)}{|Aut(H)|}  \left(\bm1\{X_{s_u}=a\}-\frac{1}{c}\right)\left(\bm1\{X_{s_v}=a\}-\frac{1}{c}\right) , \\
& = \sum_{a=1}^c  \sum_{1 \leq u < v \leq |V(H)|} \sum_{1 \leq i \ne j \leq |V(G_n)|} \frac{M_{u, v}(i, j, H, G_n)}{|Aut(H)|}   \left(\bm1\{X_{i}=a\}-\frac{1}{c}\right)\left(\bm1\{X_{j}=a\}-\frac{1}{c}\right), \\
&= |V(G_n)|^{|V(H)|-1} \cdot \sum_{a=1}^c  \sum_{1\le i\ne j\le |V(G_n)|} \bm B_H(G_n)_{ij}  \left(\bm1\{X_{i}=a\}-\frac{1}{c}\right)\left(\bm1\{X_{j}=a\}-\frac{1}{c}\right),
\end{align*} 
where $\bm B_H(G_n)=((\bm B_H(G_n)_{ij}))_{i, j \in [|V(G_n)|]}$ is a matrix with 
\begin{align*}
\bm B_H(G_n)_{ij}&=\frac{1}{|V(G_n)|^{|V(H)|-1}} \cdot \frac{1}{|Aut(H)|} \sum_{ 1 \leq u < v \leq |V(H)| } M_{u, v}(i, j, H, G_n). 
\end{align*}
Now, from \eqref{eq:BH} it is easy to see that $\overline {\bm B}_H(G_n)_{ij}=\frac{\bm B_H(G_n)_{ij}+\bm B_H(G_n)_{ji}}{2}$,  
which along with Lemma \ref{lm:J2} gives the desired conclusion.
\end{proof}

Next, define the analogous random variable for $\Gamma_2(H,G_n)$, where the centered color vectors $\{\bm R_v : v \in V(G_n) \}$, where $\bm R_v=(\bm 1\{X_v=a\}-\frac{1}{c})_{a \in [c]}$, are replaced by a collection of i.i.d. Gaussian vectors with the same mean and covariance structure. More formally, 
\begin{align}\label{eq:Q}
Q_2(H, G_n)& :=\frac{1}{c^{|V(H)|-2}}  \sum_{a=1}^c \sum_{1 \leq i \ne j \leq |V(G_n)|} \overline{\bm B}_H(G_n)_{ij}  \hat U_{i, a}\hat U_{j, a}.
\end{align} 
with $\hat U_{v, a}=U_{v, a}-\overline{U}_{v.}$, where $\{U_{v, a}:v\in V(G_n), a\in [c]\}$ are i.i.d. Gaussians with mean 0 and variance $1/c$ random variables and $\overline{U}_{v.}=\frac{1}{c}\sum_{a=1}^c U_{v, a}$. Note that for each $v\in V(G_n)$ the random vector $\hat{\bm U}_v:=(\hat U_{v, 1}, \hat U_{v, 2}, \ldots, \hat U_{v, c})$ has mean 0 and the same covariance matrix as $\bm R_v$. 
Also, $\{\hat{\bm U}_v, v\in V(G_n)\}$ are independent and identically distributed random vectors. Finally, define 
\begin{equation}\label{eq:delta}
\Delta_2(H, G_n):=\frac{Q_2(H, G_n)}{|V(G_n)|^{|V(H)|-1}}.
\end{equation}

The next lemma shows that the moments of $\Gamma_2(H, G_n)$ and $\Delta_2(H, G_n)$ are asymptotically close.

\begin{lem}\label{lm:invariance} 
For every integer $r \geq 1$, 
\begin{align}\label{eq:invariance}
\lim_{n \rightarrow \infty}\Big\{\E\Gamma_2(H, G_n)^r-\E\Delta_2(H, G_n)^r\Big\}=0.
\end{align}
\end{lem}

\begin{proof} Fix an integer $r\geq 1$. Using the bound $|V(G_n) |\max_{i,j\in V(G_n)} \overline{\bm B}_H(G_n)_{i j }\lesssim_H  1$, a direct expansion gives 
\begin{align*} 
& c^{r|V(H)|-2r}\left|\E \Gamma_2(H,G_n)^r-\E \Delta_2(H,G_n)^r\right| \nonumber \\
\lesssim_H &\frac{1}{|V(G_n)|^r}\sum_{a_1,\cdots, a_r \in [c]} \sum_{\substack{1 \leq i_1 \ne j_1 \leq |V(G_n)| \\ \vdots \\ 1 \leq i_r \ne j_r  \leq |V(G_n)|}} \left|\E \prod_{s=1}^r \left(\bm1\{X_{i_s}=a_s\}-\frac{1}{c}\right)\left(\bm1\{X_{j_s}=a_s\}-\frac{1}{c}\right)-\E\prod_{s=1}^r\hat U_{i_s,a_s} \hat U_{j_s,a_s}\right|.
\end{align*}

Now, fix an index set $J:=\{i_1,\cdots,i_r,  j_1,\cdots,j_r\}$, where $1 \leq i_s \ne j_s \leq |V(G_n)|$, for $s \in [r]$. If an index in $J$ appears exactly once, then using $\E \left(\bm1\{X_{i}=a\}-\frac{1}{c}\right)=\E \hat  U_{i, a}=0$, for every $i \in V(G_n)$ and $a \in [c]$, it is easy to see that both the moments inside the absolute value vanish. Therefore, we can assume that every index in $J$ appears at least twice. Moreover, as the total number of terms with at most $r-1$ distinct indices from $J$ is bounded above by $|V(G_n)|^{r-1}$, it suffices to consider the terms where the number of distinct indices from $J$ is exactly $r$, up to a $o(1)$-term. But in this case every index in $J$ appears exactly twice, and to prove \eqref{eq:invariance} it suffices to show  for any such index set $(i_1,j_1),\cdots, (i_r,j_r)$ we have $$\left|\E \prod_{s=1}^r \left(\bm1\{X_{i_s}=a_s\}-\frac{1}{c}\right)\left(\bm1\{X_{j_s}=a_s\}-\frac{1}{c}\right)-\E\prod_{s=1}^r\hat U_{i_s,a_s} \hat U_{j_s,a_s}\right|=0.$$
Indeed, in this case both the moments factorize over the distinct indices, and to show equality of moments it suffices to check that for all $a,b\in [c]$ we have 
$$\E  \left(\bm1\{X_{i}=a\}-\frac{1}{c}\right)\left(\bm1\{X_{i}=b\}-\frac{1}{c}\right)=\E\hat U_{i,a} \hat U_{i,b}.$$
This follows on noting that both sides equal $\frac{1}{c}\Big(1-\frac{1}{c}\Big)$ if $a=b$, and $-\frac{1}{c^2}$ otherwise. 
\end{proof}

From Lemmas \ref{lm:delta2} and \ref{lm:invariance}, to derive the limiting distribution of $\Gamma(H, G_n)$ it suffices to derive the limiting distribution of $\Delta_2(H, G_n)$, which is the sum of $c$-quadratic forms in $\overline{\bm B}_H(G_n)$. To this end, we need to understand the spectrum of the matrix $\overline{\bm B}_H(G_n)$. We  begin by defining the notion of cycles formed by $H$, which arise in the analysis of the power-sum of the eigenvalues of $\overline{\bm B}_H(G_n)$. 

\begin{defn}\label{defn:cycleH}
Fix an integer $g \geq 2$, and let $H_1, H_2, \ldots, H_g$ be $g$ isomorphic copies of $H$, where the image of the vertex $z \in V(H)$ in $H_a$ will be denoted by $z^{(a)}$, for $a \in [g]$. Then fixing indices $J:=\{(u_a, v_a): 1 \leq u_a \ne v_a \leq |V(H)|, a \in [g]\}$, define the {\it $r$-cycle of $H$ with pivots at $J$} as the graph obtained by the union of $H_1, H_2, \ldots, H_g$, where the vertex $v_a^{(a)} \in V(H_a)$ identified with the vertex $u_{a+1}^{(a+1)} \in V(H_{a+1})$, for $a \in [g]$, with $u_{g+1}^{(g+1)}:=u_1^{(1)}$ and $H_{g+1}=H_1$. Denote this graph by $H^{(g)}(J)$.  From Definition \ref{defn:tabxyHW}, it is easy to see that 
\begin{align}\label{eq:tHgW}
t(H^{(g)}(J), W)= \int_{[0, 1]^g}\prod_{a=1}^{g}  t_{u_a, v_a}(x_a, x_{a+1}, H, W) \prod_{a=1}^g \mathrm d x_{a}.
\end{align}
Figure \ref{fig:Hcycle} shows a 5-cycle of $K_{1,2}$ and a 6-cycle of $C_4$, and the associated pivots. 
\end{defn}

\begin{figure*}[h]
\centering
\begin{minipage}[c]{1.0\textwidth}
\centering
\includegraphics[width=4.9in]
    {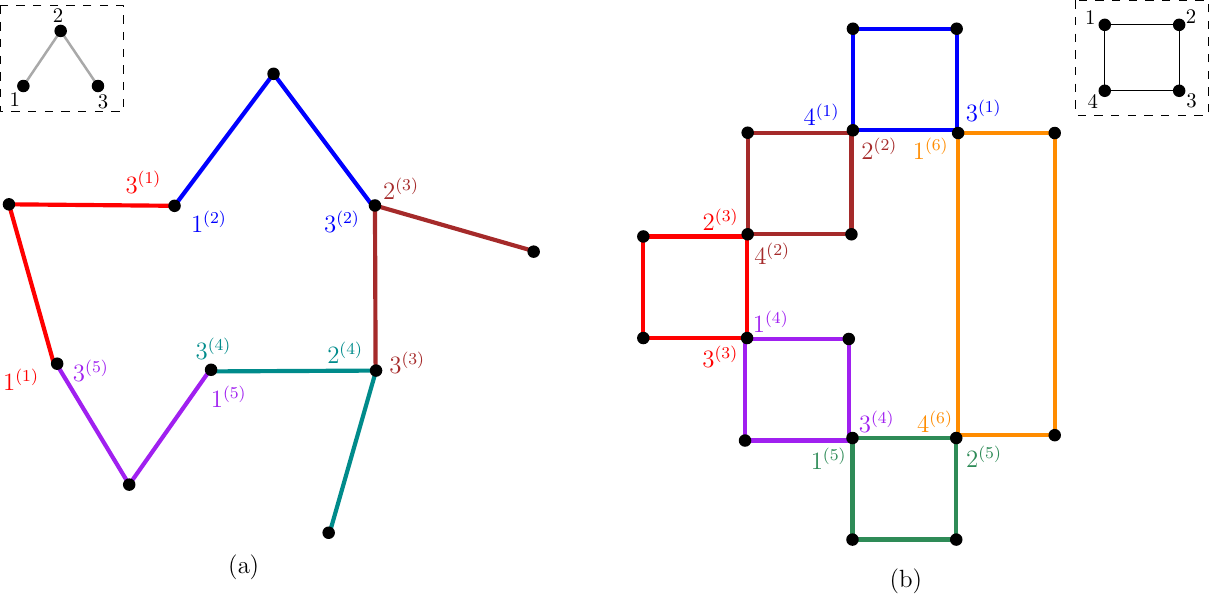}\\
\end{minipage} 
\caption{\small{A 5-cycle of $K_{1,2}$ with pivots $\{(1, 3), (1, 3), (2, 3), (2, 3), (1, 3)\}$, (b) a 6-cycle of $C_4$ with pivots $\{(3, 4), (2, 4), (2, 3), (1, 3), (1, 2), (4, 1)\}$.}}
\label{fig:Hcycle}
\end{figure*}

Equipped with the above definitions and recalling the function $W_H$ from \eqref{eq:WH}, we proceed to prove the convergence of the spectrum of $\overline{\bm B}_H(G_n)$.

\begin{lem}\label{lm:eigenpower} Let $\{\lambda_1(\overline{\bm B}_H(G_n)), \lambda_2(\overline{\bm B}_H(G_n)), \cdots, \lambda_{|V(G_n)|}(\overline{\bm B}_H(G_n))\}$ be the multi-set of eigenvalues of $\overline{\bm B}_H(G_n)$. Then, for every $g \geq 2$, $\lim_{n \rightarrow \infty}\sum_{r=1}^{|V(G_n)|} \lambda_r(\overline{\bm B}_H(G_n))^g=\sum_{i=1}^\infty \lambda_r(H,W)^g$, where $\{\lambda_1(H,W), \lambda_2(H,W), \ldots, \}$ is the multi-set of eigenvalues of $W_H$.  
Moreover, the assumption $t(H, W)>0$ ensures that the spectrum of $W_H$ is non-trivial, that is, $W_H$ has at least one (non-zero) eigenvalue. 
\end{lem}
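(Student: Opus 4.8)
The plan is to reduce the power-sum of eigenvalues to a trace and to recognize that trace as a normalized count of the $g$-cycles of $H$ from Definition \ref{defn:cycleH}. Since $\overline{\bm B}_H(G_n)$ is symmetric, $\sum_{r=1}^{|V(G_n)|}\lambda_r(\overline{\bm B}_H(G_n))^g=\mathrm{tr}(\overline{\bm B}_H(G_n)^g)=\sum_{i_1,\ldots,i_g}\prod_{a=1}^g \overline{\bm B}_H(G_n)_{i_a i_{a+1}}$, with the cyclic convention $i_{g+1}=i_1$. Substituting \eqref{eq:BH}, each factor splits into a sum over ordered pivots $(u_a,v_a)$ of the two-point homomorphism counts $M_{u_a,v_a}(i_a,i_{a+1},H,G_n)$ scaled by $(2|Aut(H)|\,|V(G_n)|^{|V(H)|-1})^{-1}$. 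Pulling out the global factor $(2|Aut(H)|)^{-g}|V(G_n)|^{-g(|V(H)|-1)}$ and the finite sum over pivot tuples $J=\{(u_a,v_a)\}_{a=1}^g$, the core object is $\sum_{i_1,\ldots,i_g}\prod_{a=1}^g M_{u_a,v_a}(i_a,i_{a+1},H,G_n)$, which counts maps $\phi:V(H^{(g)}(J))\to V(G_n)$ that are homomorphisms and are injective on each of the $g$ copies of $H$ separately; here $H^{(g)}(J)$ has exactly $N:=g(|V(H)|-1)$ vertices.

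First I would show that, after dividing by $|V(G_n)|^N$, this count converges to $t(H^{(g)}(J),W)$. The within-copy-injective count lies between $\hom_{\mathrm{inj}}(H^{(g)}(J),G_n)$ and $\hom(H^{(g)}(J),G_n)$; both of these, normalized by $|V(G_n)|^N$, converge to $t(H^{(g)}(J),W)$ since $G_n\Rightarrow W$, so a squeeze gives the claim. Summing over the finitely many pivot tuples $J$ and using that the sum factorizes across the $g$ cycle-edges together with \eqref{eq:WH} and Definition \ref{defn:cycleH}, the limit equals
\begin{align*}
\frac{1}{(2|Aut(H)|)^g}\sum_{J} t(H^{(g)}(J),W)=\int_{[0,1]^g}\prod_{a=1}^g W_H(x_a,x_{a+1})\,\prod_{a=1}^g \mathrm dx_a=\sum_{r=1}^\infty \lambda_r(H,W)^g,
\end{align*}
where the last equality is the standard Mercer/trace identity for the $g$-th power of the Hilbert--Schmidt operator $T_{W_H}$; it is valid and finite for $g\ge 2$ because $W_H$ is bounded, whence $\sum_r\lambda_r(H,W)^2<\infty$ and therefore $\sum_r|\lambda_r(H,W)|^g<\infty$.

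The main obstacle is the bookkeeping of degenerate configurations in the trace expansion: terms in which two pivots $i_a$ coincide, or in which interior vertices of distinct copies collide, must be shown to be negligible. The squeeze between $\hom_{\mathrm{inj}}$ and $\hom$ handles this uniformly, since every such degeneracy merges at least two of the $N$ free vertices and hence contributes only $O(|V(G_n)|^{N-1})$, which vanishes after normalization by $|V(G_n)|^N$; the only points requiring genuine care are verifying that the within-copy-injective count is indeed sandwiched as claimed and justifying the trace identity for $T_{W_H}^g$.

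For the final assertion, I would argue that $W_H$ cannot be the zero kernel. Each $t_{u,v}(\cdot,\cdot,H,W)$ is non-negative, so $W_H\ge 0$ pointwise, and since $\int_{[0,1]^2} t_{u,v}(x,y,H,W)\,\mathrm dx\,\mathrm dy=t(H,W)$ for every ordered pair $(u,v)$, integrating \eqref{eq:WH} gives $\int_{[0,1]^2}W_H(x,y)\,\mathrm dx\,\mathrm dy=\frac{|V(H)|(|V(H)|-1)}{2|Aut(H)|}\,t(H,W)>0$. A non-negative kernel with positive integral is positive on a set of positive measure, so $\int_{[0,1]^2}W_H^2=\sum_r\lambda_r(H,W)^2>0$, forcing at least one eigenvalue of $W_H$ to be non-zero.
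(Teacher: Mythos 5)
Your argument is correct, and the main body follows the same route as the paper: expand the power sum as $\tr(\overline{\bm B}_H(G_n)^g)$, decompose over pivot tuples $J$, identify the normalized count with $t(H^{(g)}(J),G_n)$, pass to the limit $t(H^{(g)}(J),W)$ using $G_n\Rightarrow W$, and match this with the trace of $T_{W_H}^g$. Two points differ. First, for the degenerate configurations you sandwich the within-copy-injective count between $\hom_{\mathrm{inj}}(H^{(g)}(J),G_n)$ and $\hom(H^{(g)}(J),G_n)$, whereas the paper directly argues that any collision of indices merges two of the $g|V(H)|-g$ free vertices and contributes $o(|V(G_n)|^{g|V(H)|-g})$; these are equivalent in substance, and your squeeze is a slightly cleaner way to package the same estimate. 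Second, and more substantively, for the final non-degeneracy assertion the paper specializes to $g=2$ with pivots $J=\{(a,b),(b,a)\}$, identifies $H^{(2)}(J)$ with the join $H^2_{(a,b)}$ of Definition \ref{defn:H2}, and invokes the Cauchy--Schwarz argument of Lemma \ref{lm:count} to get $t(H^2_{(a,b)},W)>0$; you instead observe that $W_H\ge 0$ pointwise with $\int_{[0,1]^2}W_H=\frac{|V(H)|(|V(H)|-1)}{2|Aut(H)|}\,t(H,W)>0$ (using $\int t_{u,v}(\cdot,\cdot,H,W)=t(H,W)$ for every ordered pair), so $\|W_H\|_{L^2}^2=\sum_r\lambda_r(H,W)^2>0$. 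Your route is more elementary — it bypasses Lemma \ref{lm:count} and the $2$-cycle identification entirely — at the cost of relying on the pointwise non-negativity of $W_H$, which the Cauchy--Schwarz argument does not need; both are valid here.
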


\begin{proof} Fix $g \geq 2$. Define $K_g:=\frac{1}{2^g|Aut(H)|^g}$. Then 
\begin{align*}
& \sum_{r=1}^{|V(G_n)|} \lambda_r(\overline{\bm B}_H(G_n))^g \\
&=\tr(\overline{\bm B}_H(G_n)^g) \\ 
&=\sum_{\bm j \in V(G_n)^g} \prod_{a=1}^g \overline B_H(G_n)_{j_a j_{a+1}} \nonumber \tag*{(where $j_{g+1}=j_1)$}\\ 
&=K_g \cdot \frac{1}{|V(G_n)|^{g|V(H)|-g}}\sum_{\bm j \in V(G_n)^g} \prod_{a=1}^{g}\sum_{1 \leq u \ne v \leq |V(H)|} M_{u, v}(j_a, j_{a+1}, H, G_n) \nonumber \tag*{(recall \eqref{eq:BH})} \\ 
 &=K_g \cdot \frac{1}{|V(G_n)|^{g |V(H)|-g}} \sum_{1 \leq u_1 \ne v_1 \leq |V(H)|} \cdots \sum_{1 \leq u_g \ne v_g  \leq |V(H)|} \sum_{\bm j \in V(G_n)^g} \prod_{a=1}^{g} M_{u_a, v_a}(j_a, j_{a+1}, H, G_n). 
\end{align*}

Now, fix $J=\{(u_a, v_a): 1 \leq u_a \ne v_a \leq |V(H)|, a \in [g]\}$. Recalling Definition \ref{defn:Mnuvij}, note that $M_{u_a, v_a}(j_a, j_{a+1}, H, G_n)$ is the sum over $\bm s\backslash\{s_{u_a}, s_{v_a}\}$, where $\bm s \in V(G_n)_{|V(H)|}$, that is, the sum ranges over $|V(H)|-2$ indices taking values in $\{1, 2, \ldots, |V(G_n)|\}$. This implies, the product 
\begin{align}\label{eq:prodHGn}
\sum_{\bm j \in V(G_n)^g} \prod_{a=1}^{g} M_{u_a, v_a}(j_a, j_{a+1}, H, G_n),
\end{align} 
can be expanded to obtain a sum over at most $g+g(|V(H)|-2)=g |V(H)|-g$ indices taking values in $\{1, 2, \ldots, |V(G_n)|\}$.  However, if any two of the indices are the same, then the corresponding term in the sum is $o(|V(G_n)|^{g |V(H)|-g})$. (Here, we bound each $a_{ij}(G_n)$ by 1 and the sum over each distinct index by $|V(G_n)|$.) This shows that the leading term in \eqref{eq:prodHGn} is a sum over $g |V(H)|-g$ distinct indices ranging in $V(G_n)$, which counts the number of injective homomorphisms of $H^{(g)}(J)$ in $G_n$, where $H^{(g)}(J)$ is the $g$-cycle of $H$ with pivots at $J$, as in Definition \ref{defn:cycleH}.  Therefore, 
\begin{align*}
\frac{1}{|V(G_n)|^{g |V(H)|-g}} \sum_{\bm j \in V(G_n)^g}  \prod_{a=1}^{g} M_{u_a, v_a}(j_a, j_{a+1}, H, G_n) =t(H^{(g)}(J),G_n)+o(1) \rightarrow   t(H^{(g)}(J), W).
\end{align*}

Next, recall that $\lambda_1(H,W),\lambda_2(H,W), \ldots$ are the eigenvalues of the function $W_H$, as defined in Theorem \ref{thm:cfixed}. Then by the spectral theorem \cite[Section 7.5]{lovasz_book}, 
\begin{align}
\sum_{r=1}^\infty\lambda_r^g(H,W)&= t(C_g, W_H) \nonumber \\
&=\int_{[0, 1]^g} \prod_{a=1}^g W_H(x_a, x_{a+1})  \prod_{a=1}^g \mathrm dx_a \tag*{($x_{g+1}:=x_1$)}\nonumber \\
&=K_g \int_{[0, 1]^g} \prod_{a=1}^g \sum_{1\leq u \ne v \leq |V(H)|} t_{u,v}(x_a,x_{a+1},H,W)
 \prod_{a=1}^g \mathrm dx_a \tag*{(by \eqref{eq:WH})}\nonumber \\
&=K_g \sum_{1 \leq u_1 \ne v_1 \leq |V(H)|} \cdots \sum_{1 \leq u_g \ne v_g  \leq |V(H)|}  \int_{[0, 1]^g}\prod_{a=1}^{g}  t_{u_a, v_a}(x_a, x_{a+1}, H, W) \prod_{a=1}^g \mathrm d x_{a} \nonumber \\ 
\label{eq:lambdaHW} &=K_g \sum_{1 \leq u_1 \ne v_1 \leq |V(H)|} \cdots \sum_{1 \leq u_g \ne v_g  \leq |V(H)|} t(H^{(g)}(J),W), 
\end{align}
where the last step uses \eqref{eq:tHgW}. Therefore, \eqref{eq:lambdaHW} implies that the $g$-th power sum of the eigenvalues of $\overline{\bm B}_H(G_n)$ converge to the $g$-th power sum of eigenvalues of $W_H$, for every $g \geq 2$.

Finally, note that for $g=2$ and any set of pivots of the form $J=\{(a, b), (b, a) \}$, where $1 \leq a \ne b \leq |V(H)|$, $H^{(2)}(J)=H^2_{(a, b)}$ (recall Definition \ref{defn:H2}) and by Lemma \ref{lm:count}, $t(H^{(g)}(J),W) > 0$. Therefore, $\sum_{r=1}^\infty\lambda_r^2(H,W) >0$, which implies that the spectrum of $W_H$ is non-trivial. 
\end{proof}

Having established the convergence of the spectrum of $\overline{\bm B}_{H}(G_n)$, it remains to derive the asymptotic distribution of $\Delta_2(H, G_n)$, and hence $\Gamma(H, G_n)$.  This follows by the lemma below, which can be easily proved by computing the moment generating function of $\Delta_2(H, G_n)$ using the spectral decomposition, as in \cite[Lemma 7.3]{BDM}.

\begin{lem}\cite[Lemma 7.3]{BDM} \label{lm:previous_paper}
Let $\bm Q_n=((Q_n(i, j)))_{i, j \in [|V(G_n|]}$ be a sequence of symmetric $|V(G_n)|\times |V(G_n)|$ matrices with zeros on the diagonal. If there exists constants $\lambda_1, \lambda_2, \ldots$ such that $\lim_{n \rightarrow \infty}\tr(\bm Q_n^s)=\sum_{r=1}^\infty \lambda_r^s< \infty$, for every $s\ge 2$, then $$\sum_{a=1}^c\sum_{1\le i\ne j\le |V(G_n)|}Q_n(i, j) \hat U_{i, a} \hat U_{j, a} \dto \frac{1}{c} \sum_{r=1}^\infty \lambda_r\eta_r, $$
where $\{\eta_r\}_{r \geq 1}$ is a collection of i.i.d. $\chi_{(c-1)}^2-(c-1)$ random variables. \qed
\end{lem}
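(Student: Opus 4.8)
The plan is to diagonalize $\bm Q_n$ and reduce the statistic to a weighted sum of independent chi-squared variables, obtaining an \emph{exact} distributional identity for each $n$, and then to pass to the limit by matching moment generating functions. Write the spectral decomposition $\bm Q_n=\sum_{r=1}^{|V(G_n)|}\mu_r\,\bm\phi_r\bm\phi_r^\top$, where $\mu_r:=\lambda_r(\bm Q_n)$ and $\{\bm\phi_r\}$ is an orthonormal eigenbasis of $\R^{|V(G_n)|}$, and for each color $a$ set $\hat{\bm U}_{\cdot,a}=(\hat U_{1,a},\dots,\hat U_{|V(G_n)|,a})^\top$. Since $\bm Q_n$ has zero diagonal,
$$\sum_{a=1}^c\sum_{1\le i\ne j\le |V(G_n)|}Q_n(i,j)\hat U_{i,a}\hat U_{j,a}=\sum_{a=1}^c\hat{\bm U}_{\cdot,a}^\top\bm Q_n\hat{\bm U}_{\cdot,a}=\sum_{r=1}^{|V(G_n)|}\mu_r\sum_{a=1}^c\xi_{r,a}^2,\qquad \xi_{r,a}:=\bm\phi_r^\top\hat{\bm U}_{\cdot,a}.$$

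Then I would determine the joint law of the Gaussian array $\{\xi_{r,a}\}$. The color vectors $\hat{\bm U}_v=(\hat U_{v,1},\dots,\hat U_{v,c})$ are i.i.d.\ across $v$ with covariance $\frac1c\big(I_c-\frac1c J_c\big)$ (the computation in Lemma \ref{lm:invariance}), so orthonormality of the $\bm\phi_r$ yields $\Cov(\xi_{r,a},\xi_{s,b})=\frac1c\big(\delta_{ab}-\frac1c\big)\langle\bm\phi_r,\bm\phi_s\rangle=\frac1c\big(\delta_{ab}-\frac1c\big)\delta_{rs}$. Hence the vectors $(\xi_{r,1},\dots,\xi_{r,c})$, $r\ge1$, are mutually independent and each carries exactly the covariance of a single color vector; consequently $\sum_{a=1}^c\xi_{r,a}^2$ has the law of $\frac1c\chi^2_{(c-1)}$ (the covariance $\frac1c(I_c-\frac1c J_c)$ having eigenvalue $\frac1c$ with multiplicity $c-1$ and $0$ once). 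Writing $W_r\sim\chi^2_{(c-1)}$ i.i.d.\ and invoking the zero-diagonal identity $\sum_r\mu_r=\tr(\bm Q_n)=0$, this gives, for every $n$, the exact distributional equality
$$\sum_{a=1}^c\sum_{i\ne j}Q_n(i,j)\hat U_{i,a}\hat U_{j,a}\stackrel{d}{=}\frac1c\sum_{r}\mu_r W_r=\frac1c\sum_{r}\mu_r\big(W_r-(c-1)\big),$$
so the statistic is already a centered-chi-squared combination whose weights are the eigenvalues of $\bm Q_n$.

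It then remains to pass to the limit, which I would do through the cumulant generating function. The log-MGF of $\eta:=\chi^2_{(c-1)}-(c-1)$ is $-(c-1)t-\tfrac{c-1}{2}\log(1-2t)$; substituting $t\mapsto \mu_r t/c$, summing over the independent terms, and expanding $\log(1-x)$, the linear terms cancel against the centering and only $s\ge2$ survives:
$$\log\E\exp\!\Big(\tfrac{t}{c}\sum_r\mu_r(W_r-(c-1))\Big)=\frac{c-1}{2}\sum_{s\ge2}\frac1s\Big(\frac{2t}{c}\Big)^s\sum_r\mu_r^s=\frac{c-1}{2}\sum_{s\ge2}\frac1s\Big(\frac{2t}{c}\Big)^s\tr(\bm Q_n^s).$$
The decisive observation is that this cumulant generating function depends on $\bm Q_n$ only through the power sums $\tr(\bm Q_n^s)$, $s\ge2$, which converge to $\sum_r\lambda_r^s$ by hypothesis; the identical computation gives $\frac{c-1}{2}\sum_{s\ge2}\frac1s(2t/c)^s\sum_r\lambda_r^s$ as the log-MGF of the claimed limit $\frac1c\sum_r\lambda_r\eta_r$. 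Convergence of MGFs on a neighborhood of the origin then forces convergence in distribution by the standard MGF-continuity theorem.

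The main obstacle is the uniform control needed to interchange $\lim_n$ with the sum over $s$ and to ensure finiteness of the MGFs on a common neighborhood of $0$. This is furnished by $\|\bm Q_n\|_{\mathrm{op}}\le\tr(\bm Q_n^2)^{1/2}$: since $\tr(\bm Q_n^2)\to\sum_r\lambda_r^2<\infty$ it is bounded by some $C$, so $\sup_n\|\bm Q_n\|_{\mathrm{op}}\le M:=\sqrt C$ and likewise $\sup_r|\lambda_r|\le M$. The bound $|\tr(\bm Q_n^s)|\le\sum_r|\mu_r|^s\le\|\bm Q_n\|_{\mathrm{op}}^{s-2}\tr(\bm Q_n^2)\le M^{s-2}C$ shows that, for $|t|<c/(2M)$, the general term of the series is dominated by $\frac{C}{M^2}\frac1s(2M|t|/c)^s$, which is summable independently of $n$. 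By the Weierstrass $M$-test the series converges uniformly in $n$, so termwise convergence $\tr(\bm Q_n^s)\to\sum_r\lambda_r^s$ upgrades to convergence of the full cumulant generating function, completing the argument.
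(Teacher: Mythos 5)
Your proof is correct and follows essentially the same route the paper indicates for this lemma (which it cites from \cite{BDM} rather than reproving): a spectral decomposition of $\bm Q_n$ reducing the statistic to an exact weighted sum of independent $\chi^2_{(c-1)}$ variables, followed by convergence of the cumulant generating function via the power sums $\tr(\bm Q_n^s)$. The uniform domination step using $\|\bm Q_n\|_{\mathrm{op}}\le \tr(\bm Q_n^2)^{1/2}$ correctly handles the interchange of limits, so no gaps remain.
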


 \subsection{Examples} 
\label{sec:examplecfixed}

To begin with, we consider monochromatic edges, that is, $H=K_2$. In this case, the 2-point homomorphism matrix is just the scaled adjacency matrix of $G_n$, and we re-derive \cite[Theorem 1.4]{BDM}.

\begin{example}(Monochromatic Edges) Let $G_n$ be a sequence of graphs converging to the graphon $W$ and $H=K_2$. Then $|Aut(H)|=2$ and $W_{K_2}(x, y)= \frac{1}{2} W(x, y)$, and $\lambda_r(W_{K_2}) = \frac{1}{2} \lambda_r(W)$, where $\lambda_1(W), \lambda_2(W), \ldots $ are the eigenvalues of the operator $T_W: L_2[0, 1]\rightarrow L_2[0, 1]$, defined as $(T_Wf)(x)=\int_0^1W(x, y)f(y)\mathrm dy$. Then Theorem \ref{thm:cfixed} shows 
$$\Gamma(K_2, G_n) \dto \frac{1}{2 c}\sum_{r=1}^\infty \lambda_r(W) \eta_r,$$
where $\{\eta_r\}_{r \in \N}$ are independent $\chi_{(c-1)}^2-(c-1)$ random variables, as in  \cite[Theorem 1.4]{BDM}.
\end{example}

As before, Theorem \ref{thm:cfixed} applies to convergent sequence of dense random graphs, when the limit in \eqref{eq:graph_limit} hold in probability.

\begin{example}(Erd\H os-R\'enyi random graph) Let $G_n \sim G(n, p)$ be the Erd\H os-R\'enyi random graph and $H$ be any finite simple graph. In this case, $G_n$ converges to the constant graphon $W^{(p)}=p$, the constant function $p$, and, from \eqref{eq:WH}, $W_{H}^{(p)}(x, y) =\frac{{|V(H)| \choose 2}}{|Aut(H)|} p^{|E(H)|}$.  It is easy to see that $W^{(p)}_{H}$ has only 1 non-zero eigenvalue $\lambda_1(W^{(p)}_H)= \frac{{|V(H)| \choose 2}}{|Aut(H)|} p^{|E(H)|}$. Therefore, by Theorem \ref{thm:cfixed}, 
\begin{align}\label{eq:gammaKn}
\Gamma(H, K_n) \dto  \frac{\sigma_{H, p}}{c^{|V(H)|-1}} \cdot \left(\chi^2_{(c-1)}-(c-1) \right).  
\end{align}
where $\sigma_{H, p}:= \frac{{|V(H)| \choose 2}}{|Aut(H)|}p^{|E(H)|}$. 
\end{example}

As another example, consider the limiting distribution in a non-symmetric example: number of monochromatic 2-stars in a complete bipartite graph. 

\begin{example}Let $G_n=K_{\ceil{\frac{n}{2}}, \ceil{\frac{n}{2}}}$ and $H=K_{1, 2}$. Then $|Aut(H)|=2$, and $G_n$ converges to the graphon $W= \bm 1\{(x-\frac{1}{2})(y-\frac{1}{2}) \leq 0\}$. This implies $d_W(x)=\frac{1}{2}$ for all $x \in [0, 1]$ and
\begin{align*}
W_{K_{1, 2}}(x, y)&=\frac{ W(x, y) d_W(x) + W(x, y) d_W(y) +\int_{[0, 1]} W(x, z_1) W(y, z_1)  \mathrm dz_1}{2} \\
&=
\left\{
\begin{array}{ccc}
\frac{1}{2} & \text{if}  & (x-\frac{1}{2})(y-\frac{1}{2}) \leq 0  \\
\frac{1}{4} & \text{if}  & (x-\frac{1}{2})(y-\frac{1}{2}) > 0.   
\end{array}
\right.
\end{align*}
This function has two non-zero eigenvalues $\frac{3}{8}$ and $-\frac{1}{8}$ and by Theorem \ref{thm:cfixed}
$$\Gamma(K_{1, 2}, K_{\ceil{\frac{n}{2}}, \ceil{\frac{n}{2}}})\dto \frac{1}{8 c^2}\left(3 \eta_1- \eta_2 \right),$$
where $\eta_1$ and $\eta_2$ are independent $\chi^2_{(c-1)}-(c-1)$ random variables.
\end{example}

As a final example of Theorem \ref{thm:cfixed}, consider the limiting distribution of the number of monochromatic triangles in a complete tripartite graph.

\begin{example}
Let $G_n=K_{\ceil{\frac{n}{3}}, \ceil{\frac{n}{3}},\ceil{\frac{n}{3}}}$ and $H=K_{3}$. Then $|Aut(H)|=3$, and $G_n$ converges to the graphon $W=\bm 1\{(x,y)\in \sS^c\}$, where $\sS:=[0,\frac{1}{3}]^2\bigcup [\frac{1}{3},\frac{2}{3}]^2\bigcup [\frac{2}{3},1]^2.$
A direct computation gives that for all $(u,v)\in V(K_3)$, with $u\ne v$,  
$$t_{u,v}(x,y,K_3,W)=W(x,y)\int_0^1 W(x,z)W(y,z)\mathrm dz=\frac{1}{3} \bm 1\{(x,y)\in \sS^c\},$$ which implies $W_{K_{3}}(x, y)=\frac{1}{3}\bm1\{(x,y)\in \sS^c\}$. Now, since $W_{K_3}$ has eigenvalues $\frac{2}{9},-\frac{1}{9},-\frac{1}{9}$, Theorem \ref{thm:cfixed} gives
$$\Gamma(K_{3}, K_{\ceil{\frac{n}{3}}, \ceil{\frac{n}{3}},\ceil{\frac{n}{3}}})\dto \frac{1}{9 c^2}\left(2 \eta_1- \eta_2 -\eta_3\right),$$
where $\eta_1,\eta_2,\eta_3$ are independent $\chi^2_{(c-1)}-(c-1)$ random variables.
\end{example}

We conclude with an example, which shows, as before, that the condition $t(H,W)>0$ is necessary for $\Gamma(H,G_n)$ to have a non-degenerate limit as an infinite sum of chi-squared random variables.

\begin{example}
Let $G_n=K_{1,n,n}$ be the complete 3-partite graph,  with partitions $\{z\},B,C$, and $H=K_3$. Given the color of the vertex $z$ is $a$, using the same notations as in Example \ref{ex:productpoisson}, both $L_n(a)$ and $R_n(a)$ are independent $\dBin(n,1/c)$, and consequently 
$$\sqrt{n}\Gamma(H,G_n)=\frac{T(K_3,G_n)-\E T(K_3,G_n)}{n^{\frac{3}{2}}}=\frac{L_n(a)R_n(a)-\frac{n^2}{c^2}}{n^{\frac{3}{2}}}\dto  N\left(0,\frac{2}{c}\left(1-\frac{1}{c}\right)\right).$$
Therefore, unconditionally $\sqrt{n}\Gamma(H,G_n)$ converges to a Gaussian as well, which cannot be expressed as an infinite sum of chi-squared random variables.
 \end{example}

\begin{remark} A similar thing happens in Example \ref{ex:normaldiamond}, where $G_n$ is the disjoint union of the $n$-pyramid $\cD_n$ and the complete bipartite graph $K_{n, n}$ and $H=K_3$ is the triangle. In this case, it is easy to see that $T(K_3, G_n)=\frac{1}{c}  \dBin(n, \frac{1}{c})  + (1-\frac{1}{c}) \delta_0$, a mixture of a $\dBin(n, \frac{1}{c})$ and a point mass at zero. This implies, $\Gamma(H, G_n)$ does not have a non-degenerate limiting distribution. \\ 
\end{remark}

\small

\small{\noindent{\bf Acknowledgement:} The authors thank an anonymous referee for providing many careful comments, which greatly improved the quality and presentation of the paper.}

\end{document}